\theoremstyle{plain}
\newtheorem{theorem}{Theorem}[section]
\newtheorem{lemma}[theorem]{Lemma}
\theoremstyle{definition}
\theoremstyle{remark}
\numberwithin{equation}{section}
\newcommand{\rd}{\mathrm{d}}
\newcommand{\R}{\mathbb{R}}
\newcommand{\Z}{\mathbb{Z}}
\title[]{Uniform boundedness of parametric \\ bilinear fractional integrals}
\author[]{Nuno J. Alves}
\author[]{Loukas Grafakos}
\address[N. J. Alves]{
      University of Vienna, Faculty of Mathematics, Oskar-Morgenstern-Platz 1, 1090 Vienna, Austria.}
\email{nuno.januario.alves@univie.ac.at}
\address[L. Grafakos]{
      University of Missouri, Department of Mathematics, Columbia MO 65211, USA.}
\email{grafakosl@umsystem.edu}
\begin{document}

\begin{abstract}
We provide weak-type bounds for a family of bilinear fractional integrals that 
arise in the study of Euler--Riesz systems. These bounds are uniform in the natural parameter that describes the family and are sharp, in the sense that they do not hold for any larger set of indices.
\end{abstract}

\keywords{bilinear fractional integrals, endpoint estimates, interpolation}
\subjclass[2020]{42B20, 46E30} 
\maketitle
\thispagestyle{empty} 

\section{Introduction}

Let $d \in \mathbb{N}$ and \( 0 < \alpha < d \). For \( \theta \in [0,1] \), we consider a bilinear fractional integral operator \( I_\alpha^\theta \), defined for nonnegative measurable functions \( f \) and \( g \) on \( \mathbb{R}^d \) by
\begin{equation} \label{operatorI}
I_\alpha^\theta(f,g)(x) = \int_{\R^d} f(x + (\theta - 1)y) \, g(x + \theta y) \, |y|^{\alpha - d} \, \rd y .
\end{equation} \par 
This operator was introduced in \cite{agt} by the authors and Tzavaras, motivated by a reformulation of the Euler--Riesz system that leads to an a priori gain of integrability for the density. The relevant equations model the evolution of a compressible fluid with a nonlocal repulsive potential of Riesz type and read as
\begin{equation} \label{ER}
\begin{cases}
\partial_t \rho + \nabla \cdot (\rho u) = 0, \\
\partial_t (\rho u) + \nabla \cdot (\rho u \otimes u) + \nabla \rho^\gamma + \rho \nabla K_\alpha \ast \rho = 0,
\end{cases}
\end{equation}
where $\rho \colon [0,\infty) \times \mathbb{R}^d \to [0,\infty)$ denotes the density, $u \colon [0,\infty) \times \mathbb{R}^d \to \mathbb{R}^d$ the linear velocity, and $\gamma > 1$ the adiabatic exponent. We use the notation $u \otimes u$ for the matrix with entries $u_i u_j$. The kernel $K_\alpha$ is given by
\begin{equation*}
K_\alpha(x) = \tfrac{1}{d-\alpha} |x|^{\alpha - d} 
\end{equation*}
and describes the nonlocal interaction of particles. \par 
A smooth solution $(\rho,u)$ of \eqref{ER} with fast decay at infinity satisfies the conservation of energy and mass identities:
\begin{equation}\label{apbounds}
\frac{\mathrm{d}}{\mathrm{d}t} \int_{\mathbb{R}^d} \tfrac{1}{2}\rho |u|^2 + \tfrac{1}{\gamma - 1} \rho^\gamma + \tfrac{1}{2} \rho (K_\alpha \ast \rho) \, \mathrm{d}x = 0, \qquad \frac{\mathrm{d}}{\mathrm{d}t} \int_{\mathbb{R}^d} \rho \, \mathrm{d}x = 0 ,
\end{equation}
which provides an a priori estimate for solutions. This, in particular, implies the following regularity for the density: 
\begin{equation}  \label{apriori_reg}
\rho \in L^\infty\big((0,\infty);L^1 \cap L^\gamma(\mathbb{R}^d)\big).
\end{equation} 
\par 
Interestingly, by exploiting the structure of the equations, and in particular the symmetry of the interaction kernel $K_\alpha$, it is possible to improve this a priori regularity of the density for finite-energy solutions. The key idea is to rewrite system~\eqref{ER} as a space-time divergence-free condition for a suitable positive-definite symmetric tensor. Then, a gain in integrability for the density $\rho$ follows from Serre’s theory of \textit{compensated integrability}~\cite{serre2018divergence,serre2019compensated}, as established in~\cite[Theorem~3.2]{agt}, yielding
\begin{equation} \label{improved_reg}
\rho \in L^{\gamma + \frac1d}\big((0,\infty) \times \mathbb{R}^d\big).
\end{equation}
\par 
 Among all terms in the system, the only one that is not in divergence form is the nonlinear nonlocal interaction $\rho \nabla K_\alpha \ast \rho$. Using the symmetry of $K_\alpha$, this term can be formally rewritten as the divergence of a positive-definite symmetric tensor:
\begin{equation} \label{Sind}
\rho \nabla K_\alpha \ast \rho = \nabla \cdot S_\alpha(\rho)  
\end{equation}
where 
\begin{equation*} \label{S}
S_\alpha(\rho) = \tfrac{1}{2} \int_0^1 \int_{\R^d} \rho(x + (\theta - 1)y) \, \rho(x + \theta y) \, |y|^{\alpha - d - 2} \, y \otimes y \, \rd y \, \rd \theta .
\end{equation*}
By a tensor we mean a matrix-valued function. See \cite[Appendix A]{agt} for the details of this derivation. \par 
The structural identity \eqref{Sind} allows the entire Euler--Riesz system to be cast in divergence form,
\begin{equation} \label{tensorER}
\nabla_{t,x} \cdot
\begin{bmatrix}
\rho & (\rho u)^\top  \\
\rho u & \rho u\otimes u + \rho^\gamma I_d + S_\alpha(\rho)  \\
\end{bmatrix} = 0,
\end{equation}
where $I_d$ is the $d\times d$ identity matrix. The tensor in~\eqref{tensorER} is a divergence-free positive symmetric tensor, thus fitting into the framework of compensated integrability.
 \par 
  The operator $I_\alpha^\theta$ thus arises naturally in the study of the integrability mapping properties of $S_\alpha$, a key step in understanding the nonlocal term of the reformulated Euler--Riesz system~\eqref{tensorER}. The main result of \cite{agt} establishes that $I_\alpha^\theta$ maps $L^p(\R^d) \times L^q(\R^d)$ to $L^r(\R^d)$ uniformly in $\theta$ under some conditions on $p,q,r$. The result is as follows:

\begin{theorem}[{\cite{agt}}] \label{thm_uniform_strong_bounds} 
Let \( p, q, r \) be integrability exponents satisfying $1 < p, q < d/\alpha$ and
\begin{equation} \label{r_exponent}
\frac{1}{p}+ \frac{1}{q} = \frac{1}{r} + \frac{\alpha}{d} .
\end{equation}
Then there is a constant $C = C(\alpha, d, p, q) > 0$ independent of $\theta$ such that 
\begin{equation}\label{eq_uniform_strong_bounds}
\big\| I_\alpha^\theta(f, g) \big\|_r \leq C \, \| f \|_p \, \| g \|_q 
\end{equation}
for all $f \in L^p(\mathbb{R}^d)$ and $g \in L^q(\mathbb{R}^d)$.
\end{theorem}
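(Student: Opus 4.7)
The plan is to reduce the parametric family $\{I_\alpha^\theta\}_{\theta \in [0,1]}$ to a simpler one-parameter family via symmetry and an affine change of variables, and then to obtain uniform bounds by interpolating between two endpoints at which the bound is classical.

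First, the substitution $y \mapsto -y$ in \eqref{operatorI} yields the identity $I_\alpha^{1-\theta}(f,g) = I_\alpha^\theta(g,f)$, so (by the symmetry of the hypotheses under $(p,q) \leftrightarrow (q,p)$) it suffices to prove the uniform bound for $\theta \in [0, 1/2]$. For such $\theta$, the substitution $z = (1-\theta)y$ in \eqref{operatorI} gives
$$I_\alpha^\theta(f,g)(x) = (1-\theta)^{-\alpha}\, B_\mu(f, g)(x), \qquad B_\mu(f,g)(x) := \int_{\R^d} f(x-z)\, g(x+\mu z)\, |z|^{\alpha-d}\, \rd z,$$
with $\mu = \theta/(1-\theta) \in [0, 1]$. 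Since $(1-\theta)^{-\alpha} \leq 2^\alpha$ on $[0, 1/2]$, the theorem reduces to showing $\|B_\mu(f,g)\|_r \leq C \|f\|_p \|g\|_q$ with $C$ independent of $\mu \in [0,1]$.

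At the endpoint $\mu = 0$, $B_0(f,g) = g \cdot I_\alpha f$, and Hölder's inequality together with the Hardy--Littlewood--Sobolev theorem (applicable since $1 < p < d/\alpha$) gives
$$\|B_0(f,g)\|_r \leq \|g\|_q\, \|I_\alpha f\|_s \lesssim \|g\|_q\, \|f\|_p, \qquad \tfrac{1}{s} = \tfrac{1}{p} - \tfrac{\alpha}{d},$$
with $s$ matching \eqref{r_exponent}. At the endpoint $\mu = 1$, $B_1$ is the classical symmetric bilinear fractional integral, whose $L^p \times L^q \to L^r$ boundedness for $1 < p, q < d/\alpha$ is due to Grafakos and Kenig--Stein. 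Both endpoint constants are absolute.

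For intermediate $\mu \in (0, 1)$, I would establish uniform-in-$\mu$ weak-type estimates $B_\mu: L^{p_j} \times L^{q_j} \to L^{r_j, \infty}$ at the extreme corners of the admissible region defined by \eqref{r_exponent}, and then apply bilinear Marcinkiewicz interpolation to recover strong-type bounds in the interior. The main obstacle is precisely the uniformity of these weak-type estimates: a naive rescaling of $B_\mu$ to $B_1$ introduces a factor $\mu^{-\alpha}$ that blows up as $\mu \to 0$, so the proof must work with $B_\mu$ in its asymmetric form --- for instance via a dyadic decomposition of the kernel in $|z|$ with scale-dependent Hölder pairings that treat the two arguments $x - z$ and $x + \mu z$ in parallel, ensuring that neither the degenerate limit $\mu \to 0$ (where $B_\mu$ collapses to $g \cdot I_\alpha f$) nor the symmetric limit $\mu \to 1$ dominates the constant.
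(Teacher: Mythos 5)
Your reduction to $B_\mu$ with $\mu=\theta/(1-\theta)\in[0,1]$ via the symmetry $I_\alpha^\theta(f,g)=I_\alpha^{1-\theta}(g,f)$ and the change of variables $z=(1-\theta)y$ is correct, and the endpoint verifications at $\mu=0$ (H\"older plus Hardy--Littlewood--Sobolev) and $\mu=1$ (classical $B_\alpha$) are also correct. But these verifications do nothing toward the theorem: there is no interpolation theorem in the \emph{parameter} $\mu$, so knowing the bound at $\mu=0$ and $\mu=1$ gives no control over intermediate $\mu$. The opening sentence of the proposal (``obtain uniform bounds by interpolating between two endpoints'') is therefore not a viable plan; you seem to recognize this in the final paragraph, where you instead propose to prove uniform-in-$\mu$ restricted weak-type estimates at the corners of the square and then apply bilinear Marcinkiewicz interpolation (Theorem~\ref{Minterpolation}).

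That last paragraph is the correct strategy --- it is precisely the approach of \cite{agt}, which decomposes the kernel dyadically through the truncated operators $I_j^\theta$ in \eqref{Ithetaj}, proves $L^1\times L^1$ bounds for $I_j^\theta$ into $L^1$ and $L^{1/2}$ uniformly in $\theta$ (Lemma~3.1 above, equations \eqref{aux0}--\eqref{aux1}), derives restricted weak-type corner estimates from these via the characterization \eqref{weaknorm} of the weak quasi-norm and a summation lemma of the type \eqref{A1}--\eqref{A2}, and finally invokes Marcinkiewicz interpolation. However, in your proposal this core content is only announced, not carried out: you say you ``would establish'' the uniform weak-type estimates ``via a dyadic decomposition \dots with scale-dependent H\"older pairings,'' but no actual estimate is written down and no cancellation or summation mechanism is exhibited. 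The whole difficulty of the theorem is concentrated in exactly that omitted step, so as it stands the proposal is an accurate description of the shape of the proof rather than a proof. To close the gap you would need, at minimum, the uniform bounds \eqref{aux0}--\eqref{aux1} for the truncated pieces, a passage from them to uniform restricted weak-type bounds at the four corners $(1,1)$, $(1,\alpha/d)$, $(\alpha/d,1)$, $(\alpha/d,\alpha/d)$ of the square, and the geometric summation over scales; one also needs to check that the constants produced by this scheme do not degenerate as $\mu\to 0$, which is not automatic and is where the asymmetry between the two slots of $B_\mu$ must genuinely be exploited.
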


Theorem \ref{thm_uniform_strong_bounds} provides uniform strong-type boundedness for $I_\alpha^\theta$ when the pair $(1/p, 1/q)$ lies inside the square given by the convex hull of $\{(1,1),(1,\alpha/d),(\alpha/d,1),(\alpha/d,\alpha/d)\}$. In this work, we complement this result in two directions: First, we prove uniform weak-type endpoint estimates along the boundary of this square; interestingly, different edges of the square exhibit distinct behaviors with respect to the uniformity of the estimates in the parameter $\theta$; see Theorems \ref{thm_weak_bounds} and \ref{thm_lorentz}. Secondly,  we establish strong-type boundedness over a larger range of exponents, corresponding to the interior of a pentagon --- these estimates are uniform in $\theta$ away from $0$ and $1$; see Theorem \ref{thm_strong_bounds}. This completes the mapping properties of $I_\alpha^\theta$ on Lebesgue and Lorentz spaces initiated in \cite{agt}.
\par 
Bilinear fractional integral operators have been an important object of study in harmonic analysis over the past three decades. Their significance comes from their singular nature and the challenge of determining whether such operators admit bounds in terms of the norms of the functions on which they act. This question is far from trivial and over the years, a variety of boundedness results have been obtained.
\par
One of the earliest and most studied examples is the operator $B_\alpha$, defined for nonnegative measurable functions by
\begin{equation} \label{operatorB}
B_\alpha(f,g)(x) = \int_{\mathbb{R}^d} f(x - y) \, g(x + y) \, |y|^{\alpha - d} \, \mathrm{d}y  .
\end{equation}
This operator was introduced by the second author in~\cite{grafakos1992multilinear} and further investigated by Kenig and Stein~\cite{kenig1999multilinear}, as well as by the second author and Kalton~\cite{grafakos2001some}, where bounds in Lebesgue spaces were established. Extensions of these boundedness results for $B_\alpha$ have been obtained in the rough kernel case~\cite{ding2002rough}, on weighted Lebesgue spaces~\cite{moen2014new, li2016two, hoang2018weighted, furuya2020weighted}, and on Morrey spaces~\cite{hatano2019note,he2021bilinear}.
\par 
We note that $I_\alpha^\theta(f,g)$ interpolates between $I_\alpha(f) \, g$ and $f \, I_\alpha(g)$, which correspond to the cases $\theta = 0$ and $\theta = 1$, respectively, where $I_\alpha$ is the classical singular fractional integral (or Riesz potential)
\begin{equation} \label{operatorSI}
I_\alpha(f)(x) = \int_{\R^d} f(x - y) \, |y|^{\alpha - d} \, \rd y .
\end{equation}
The mid point of this interpolation corresponds to the operator $B_\alpha$ through the identity
\begin{equation} \label{relBI}
I_\alpha^{1/2}(f,g) = 2^\alpha B_\alpha(f,g)  .
\end{equation}
Thus, the known bounds for the operator $B_\alpha$ are easily recovered from the results we obtain for $I_\alpha^\theta$. We emphasize, however, that the search for uniform bounds of the $I_\alpha^\theta$ makes its study more intricate than that of $B_\alpha$. Moreover, extensions of these uniform estimates to weighted $L^p$ or Morrey spaces would provide a broader context for this family of operators and may be of independent interest. 
\par 
The manuscript is organized as follows. Section~\ref{section_main_results} contains the statement of the main results. In Section~\ref{section_prelim}, we consider an auxiliary operator $I_j^\theta$ and deduce several boundedness results for it. The proofs of Theorems~\ref{thm_weak_bounds} and~\ref{thm_strong_bounds} are given in Section~\ref{section_proofs}, and the proof of Theorem~\ref{thm_lorentz} appears in Section~\ref{section_proof_lorentz}. In the final part of this work, Section~\ref{section_sharp}, we investigate the sharpness of Theorem~\ref{thm_weak_bounds}.

\vspace{2mm}
\noindent \textbf{Notation.} \par
We explain the notation used in the statements of the theorems and throughout the text.
\par For $0 < p \leq \infty$, we denote by $L^p(\R^d)$ the standard Lebesgue space of measurable functions on $\R^d$, equipped with the norm $\|\cdot\|_p$. The weak Lebesgue space is denoted by $L^{p,\infty}(\R^d)$, with quasi-norm $\|\cdot\|_{p,\infty}$, and $L^{p,1}(\R^d)$ stands for the Lorentz space with indices $p$ and $1$, whose quasi-norm is denoted by $\|\cdot\|_{p,1}$. For a thorough exposition of the basic properties of these spaces, we refer the reader to \cite[Chapter 1]{grafakos2014classical}.

\par
Given $R > 0$ and $x \in \R^d$, the open ball centered at $x$ with radius $R$ is denoted by $B_x(R)$; for $x = 0$, we simply write $B(R)$.

\par
If $E \subseteq \R^d$ is measurable, its Lebesgue measure is denoted by $|E|$, and its characteristic function by $\chi_E$.

\par
The symbol $c$ denotes (possibly different) positive constants depending at most on $\alpha$, $d$, or both. Similarly, $C$ denotes positive constants that may depend on $\alpha$ and $d$, as well as on the integrability exponents $p,q$.

\section{Main results} \label{section_main_results}

Our first main result, stated below, provides the complete range of exponents for which weak-type bounds hold uniformly in $\theta$.

 \begin{theorem} \label{thm_weak_bounds}
Let $0 < \delta \leq 1/2$. The following assertions hold:
\begin{enumerate}[(i)]
\item If $1\le p  < d/\alpha  $, then there is a constant $C = C( \alpha, d, p) > 0$ independent of $\theta$ such that 
\begin{equation} \label{ceiling}
\big\|I_\alpha^\theta(f,g) \big\|_{r, \infty} \leq C \, \|f \|_p \, \|g \|_1
\end{equation}
for all $f \in L^p(\R^d)$ and $g \in L^1(\R^d)$, where $1/r=1/p+1-\alpha/d$.
\item If $1 \leq q   < d/\alpha$, then there is a constant $C = C( \alpha, d, q)>0$ independent of $\theta$ such that 
\begin{equation} \label{outer_wall}
\big\|I_\alpha^\theta(f,g) \big\|_{r, \infty} \leq C \, \|f \|_1 \, \|g \|_q
\end{equation}
for all $f \in L^1(\R^d)$ and $g \in L^q(\R^d)$, where $1/r=1/q+1-\alpha/d$.
\item If $0 \leq  \theta \leq 1 - \delta$, and $1 \leq p <   d/\alpha$, then there is a constant $C = C(\delta, \alpha, d, p) > 0$ such that 
\begin{equation} \label{floor}
\big\|I_\alpha^\theta(f,g) \big\|_{p, \infty} \leq C \, \|f \|_p \, \|g \|_\frac{d}{\alpha}
\end{equation}
for all $f \in L^p(\R^d)$ and $g \in L^{\frac{d}{\alpha}}(\R^d)$.
\item If $\delta \leq \theta \leq 1 $ and $1 \leq q <   d/\alpha$, then there is a constant $C = C(\delta, \alpha, d, q)>0$ such that 
\begin{equation} \label{inner_wall}
\big\|I_\alpha^\theta(f,g) \big\|_{q, \infty} \leq C \, \|f \|_{\frac{d}{\alpha}} \, \|g \|_q
\end{equation}
for all $f \in L^{\frac{d}{\alpha}}(\R^d)$ and $g \in L^q(\R^d)$.
\item If $\delta \leq \theta \le 1-\delta$, then there is a constant $C = C(\delta, \alpha, d)>0$ such that 
\begin{equation} \label{bad_corner}
\big\| I_\alpha^\theta(f,g) \big\|_{\frac{d}{\alpha}, \infty } \leq C \, \|f \|_{\frac{d}{\alpha}} \, \|g \|_{\frac{d}{\alpha}}
\end{equation}
for all $f \in L^{\frac{d}{\alpha}}(\R^d)$ and $g \in L^{\frac{d}{\alpha} }(\R^d)$.
\end{enumerate}
\end{theorem}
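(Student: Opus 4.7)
My plan is to exploit the reflection symmetry $y \mapsto -y$ in the integral defining $I_\alpha^\theta$, which yields the identity $I_\alpha^\theta(f,g) = I_\alpha^{1-\theta}(g,f)$ and therefore reduces (ii) to (i) and (iv) to (iii). The trivial endpoints $\theta \in \{0,1\}$ give the pointwise identities $I_\alpha^0(f,g) = g \cdot I_\alpha f$ and $I_\alpha^1(f,g) = f \cdot I_\alpha g$, where $I_\alpha$ is the classical Riesz potential \eqref{operatorSI}; these cases are settled by the weak $L^{p^*}$-bound for $I_\alpha$ combined with the generalized Hölder inequality in Lorentz spaces, and they serve as the template that the intermediate values of $\theta$ must match.

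For parts (i) and (ii), after the symmetry reduction it suffices to treat $\theta \in [0,1/2]$, so that $1-\theta \geq 1/2$. A dilation $y' = (1-\theta) y$ then transforms the operator into
\[
I_\alpha^\theta(f,g)(x) = (1-\theta)^{-\alpha}\int_{\R^d} f(x - y')\, g(x + \sigma y')\,|y'|^{\alpha-d}\,\rd y',
\]
with $\sigma = \theta/(1-\theta) \in [0,1]$ and prefactor $(1-\theta)^{-\alpha} \leq 2^{\alpha}$. I would then apply a bilinear Hedberg-type decomposition, using the annular pieces $I_j^\theta$ introduced in Section \ref{section_prelim}: the small-scale part is dominated by a product of Hardy--Littlewood maximal functions of $f$ and $g$, and the large-scale part is dominated by a power of the cutoff times $\|f\|_p \|g\|_1$. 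Optimizing the cutoff and applying Chebyshev with the weak $L^1$-boundedness of the maximal operator gives a weak $L^r$-estimate uniformly in $\theta$.

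For (iii) and (iv), the hypothesis $\theta \in [0,1-\delta]$ (respectively, $[\delta,1]$) keeps the relevant dilation factor $(1-\theta)^{-\alpha}$ (respectively, $\theta^{-\alpha}$) bounded by $\delta^{-\alpha}$; I would then interpolate the annular bounds for $I_j^\theta$ from Section \ref{section_prelim} between the strong estimates of Theorem \ref{thm_uniform_strong_bounds} and the trivial $L^p \times L^\infty \to L^{p^*,\infty}$ estimate obtained by pulling $\|g\|_\infty$ out of the integral. Finally, for (v), where both $\theta$ and $1-\theta$ are at least $\delta$, a fully symmetric bilinear Hedberg split produces the pointwise bound
\[
\big|I_\alpha^\theta(f,g)(x)\big| \leq C_\delta\Bigl(R^{\alpha}\, M\bigl(|f|^{d/\alpha}\bigr)(x)^{\alpha/d}\, M\bigl(|g|^{d/\alpha}\bigr)(x)^{\alpha/d} + R^{-\alpha}\,\|f\|_{d/\alpha}\,\|g\|_{d/\alpha}\Bigr)
\]
valid for every $R>0$. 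Optimizing $R$ pointwise and applying Chebyshev's inequality together with the $L^1 \to L^{1,\infty}$ boundedness of $M$ applied to $|f|^{d/\alpha}$ and $|g|^{d/\alpha}$ delivers the weak $L^{d/\alpha}$-estimate.

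The principal obstacle is maintaining uniformity of the constants in $\theta$: natural substitutions that reduce $I_\alpha^\theta$ to the symmetric operator $B_\alpha$ of \eqref{operatorB} generically produce factors $\theta^{-\alpha}$ or $(1-\theta)^{-\alpha}$ that blow up at the endpoints. The symmetry identity together with the $\delta$-separation hypotheses in (iii)--(v) are precisely what one needs to absorb these factors. Moreover, the corner estimate (v) is not accessible through bilinear Marcinkiewicz interpolation between (iii) and (iv)---a straightforward convex combination of their Lorentz exponent triples never reaches $(\alpha/d, \alpha/d, \alpha/d)$---and must therefore be proved directly via the bilinear Hedberg-type argument above.
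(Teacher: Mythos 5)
Your reflection identity $I_\alpha^\theta(f,g) = I_\alpha^{1-\theta}(g,f)$, obtained from the substitution $y\mapsto -y$, is correct and is a genuine simplification: the paper instead treats parts $(ii)$ and $(iv)$ ``analogously,'' so your symmetry makes the redundancy explicit. Your overall strategy, though, is substantially different from the paper's. The paper never uses a Hedberg-type pointwise bound: it applies the weak-norm characterization~\eqref{weaknorm} with $s=\tfrac12$, the dyadic decomposition~\eqref{IboundedIj} into the annular pieces $I_j^\theta$, the $L^{1/2}$ estimates for $\|I_j^\theta(f,g)\chi_E\|_{1/2}$ of Lemma~\ref{OOKK} (whose $\min\{2^{dj},\cdot\}$ structure encodes precisely the small-scale/large-scale dichotomy you are trying to reproduce pointwise), and the summation Lemma~\ref{SUM}. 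This set-based approach is what ultimately handles all exponents $1\le p<d/\alpha$ in a unified way, including $p=1$, which is delicate for an interpolation scheme.

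There is a concrete gap in your argument for part $(v)$. The pointwise estimate
\[
\big|I_\alpha^\theta(f,g)(x)\big| \leq C_\delta\Bigl(R^{\alpha}\, M\bigl(|f|^{d/\alpha}\bigr)(x)^{\alpha/d}\, M\bigl(|g|^{d/\alpha}\bigr)(x)^{\alpha/d} + R^{-\alpha}\,\|f\|_{d/\alpha}\,\|g\|_{d/\alpha}\Bigr)
\]
is obtained (in both the small-scale and large-scale parts) by the three-exponent H\"older inequality with indices $d/\alpha$, $d/\alpha$, and $d/(d-2\alpha)$. These exponents are consistent only when $\tfrac{2\alpha}{d}\le 1$, i.e.\ $\alpha\le d/2$. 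When $\alpha>d/2$ the third exponent becomes negative and the bound does not follow; indeed, the product of two generic $L^{d/\alpha}$ functions need not be locally integrable in that regime, so it is not clear the displayed pointwise bound even holds. Your proposal does not address this range of $\alpha$, whereas the statement covers all $0<\alpha<d$. Moreover, your final remark that part $(v)$ ``must be proved directly'' because it cannot be reached by interpolating between $(iii)$ and $(iv)$ is a red herring: the paper does \emph{not} interpolate between $(iii)$ and $(iv)$. It first establishes Theorem~\ref{thm_strong_bounds} by Marcinkiewicz interpolation (Theorem~\ref{Minterpolation}) at the five pentagon vertices $(1,1)$, $(1,0)$, $(0,1)$, $(\alpha/d,0)$, $(0,\alpha/d)$, where the restricted weak-type bounds carry $\theta^{-\alpha}$ or $(1-\theta)^{-\alpha}$ factors; since $(\alpha/d,\alpha/d)$ lies strictly inside that pentagon, $(v)$ follows immediately with a constant of the form $c\,\theta^{-\kappa_1}(1-\theta)^{-\kappa_2}$, uniformly controlled on $[\delta,1-\delta]$. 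You would need either to adopt such an interpolation route or to supply a replacement for the Hedberg bound when $\alpha>d/2$.

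A further, smaller concern: for parts $(iii)$--$(iv)$ you propose to interpolate between Theorem~\ref{thm_uniform_strong_bounds} and an $L^p\times L^\infty\to L^{p^*,\infty}$ bound. Theorem~\ref{thm_uniform_strong_bounds} is only available for $1<p,q<d/\alpha$, i.e.\ strictly inside the square, so the segment through $(1,\alpha/d)$ or $(\alpha/d,1)$ cannot be generated from a point with $1/p=1$ in the interior of that square; the endpoint $p=1$ of $(iii)$ (respectively $q=1$ of $(iv)$) therefore escapes this scheme, and the best that two-point restricted-weak-type interpolation yields there is a Lorentz $L^{1,1}\times L^{d/\alpha,1}$ hypothesis, which is the content of Theorem~\ref{thm_lorentz} rather than of $(iii)$--$(iv)$.
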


\par 
In Theorem \ref{thm_weak_bounds},  the different edges of the square inside which uniform strong-type bounds hold exhibit varying behavior with respect to the uniformity in $\theta$ of the estimates; see Figure 1.  The weak-type estimates are uniform in $\theta \in [0,1]$ on the upper and right edges, uniform in $\theta$ away from $1$ on the lower edge, and uniform in $\theta$ away from $0$ on the left edge. The southwestern corner is where the behavior is most delicate as uniformity holds {\it only} away from both critical extremes of the unit interval. \par

At this point, one may wonder whether full uniform weak-type estimates can hold on the ``bad'' edges and corners at the cost of restricting the domain of the operator $I_\alpha^\theta$. The answer is yes, as long as the domains are  appropriate Lorentz spaces $L^{p,1}$. This as shown in the next theorem.

\begin{theorem} \label{thm_lorentz}
The following estimates hold uniformly in $\theta$:
\begin{enumerate}[(i)]
\item If $1 \leq p \leq d/\alpha$, then there is a constant $C = C(\alpha,d,p) > 0$ such that 
\begin{equation}\label{floor_uniform}
\big\| I_\alpha^\theta(f,g) \big\|_{p,\infty} \leq C \, \|f \|_{p,1} \, \|g \|_{\frac{d}{\alpha},1}
\end{equation} 
for all $f \in L^{p,1}(\R^d)$ and $g \in L^{\frac{d}{\alpha},1}(\R^d)$.
\item If $1 \leq q \leq d/\alpha$, then there is a constant $C = C(\alpha,d,q) > 0$ such that 
\begin{equation}\label{inner_wall_uniform}
\big\| I_\alpha^\theta(f,g) \big\|_{q,\infty} \leq C \, \|f \|_{\frac{d}{\alpha},1} \, \|g \|_{q,1}
\end{equation} 
for all $f \in L^{p,1}(\R^d)$ and $g \in L^{\frac{d}{\alpha},1}(\R^d)$.
\end{enumerate}
\end{theorem}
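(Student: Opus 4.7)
By the operator symmetry $I_\alpha^\theta(f,g)=I_\alpha^{1-\theta}(g,f)$ (obtained from the change of variable $y\mapsto -y$), statements (i) and (ii) are equivalent, so it suffices to prove (i). The plan is to first establish the restricted weak-type bound
\begin{equation*}
\big\|I_\alpha^\theta(\chi_E,\chi_F)\big\|_{p,\infty} \;\le\; C\,|E|^{1/p}\,|F|^{\alpha/d}, \qquad \theta\in[0,1],\quad p\in[1,d/\alpha],
\end{equation*}
uniformly in $\theta$ (in fact the argument will produce the stronger strong-type bound), and then transfer it to the full Lorentz-space estimate via the standard atomic decomposition $f=\sum_k\lambda_k\chi_{E_k}$ with $\sum_k|\lambda_k|\,|E_k|^{1/p}\lesssim\|f\|_{p,1}$ (and similarly for $g$), followed by bilinearity and resummation in the target space, using the equivalent norm of $L^{p,\infty}$ when $p>1$ and the strong-type $L^1$ bound directly when $p=1$.

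The restricted bound is built from two uniform ingredients. First, Fubini together with the substitution $z=x+(\theta-1)y$ yields the $\theta$-independent identity
\begin{equation*}
\big\|I_\alpha^\theta(\chi_E,\chi_F)\big\|_1 \;=\; \int \chi_E(z)\,I_\alpha(\chi_F)(z)\,dz \;\le\; \|\chi_E\|_1\,\|I_\alpha\chi_F\|_\infty \;\le\; C\,|E|\,|F|^{\alpha/d},
\end{equation*}
where the last step uses the classical endpoint embedding $I_\alpha\colon L^{d/\alpha,1}\to L^\infty$ together with $\|\chi_F\|_{d/\alpha,1}=|F|^{\alpha/d}$. Second, for $\theta\in[\tfrac12,1]$ the trivial pointwise bound $\chi_E\le 1$ and the substitution $z=\theta y$ produce
\begin{equation*}
I_\alpha^\theta(\chi_E,\chi_F)(x) \;\le\; \theta^{-\alpha}\,I_\alpha(\chi_F)(x) \;\le\; C\,|F|^{\alpha/d},
\end{equation*}
uniformly on this sub-interval. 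Log-convexity of $L^p$ norms, $\|h\|_p\le\|h\|_1^{1/p}\|h\|_\infty^{1-1/p}$, then gives $\|I_\alpha^\theta(\chi_E,\chi_F)\|_p\le C|E|^{1/p}|F|^{\alpha/d}$ uniformly on $\theta\in[\tfrac12,1]$ for every $p\in[1,\infty]$, and a fortiori for $\|\cdot\|_{p,\infty}$.

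On $\theta\in[0,\tfrac12]$, the restricted weak-type bound follows from Theorem~\ref{thm_weak_bounds}(iii) with $\delta=\tfrac12$ when $p<d/\alpha$, and at the corner $p=d/\alpha$ the operator symmetry reduces the problem to the $[\tfrac12,1]$ case already treated, since the exponents $|E|^{1/p}|F|^{\alpha/d}$ and $|F|^{1/p}|E|^{\alpha/d}$ coincide when $1/p=\alpha/d$. The main obstacle is the Lorentz-space transfer at $p=1$, where $L^{1,\infty}$ is not normable and the usual quasi-subadditivity argument breaks down; this is circumvented by exploiting that the restricted $L^1$ bound above is genuinely strong-type, so the atomic-decomposition sum can be estimated directly in $L^1\subset L^{1,\infty}$.
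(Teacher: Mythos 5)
Your argument is correct, and it takes a genuinely different route from the paper's. The paper proves a new auxiliary lemma bounding $\int_E I_j^\theta(\chi_A,\chi_B)\,\rd x$ by $C\,2^{dj(1-\alpha p/d)}\min\{2^{\alpha p j}|E|,|A|\,|B|^{\alpha p/d}\}$, then feeds this into the dyadic decomposition \eqref{IboundedIj}, the summation estimate \eqref{A2}, and the sup characterization \eqref{weaknorm} to obtain the restricted weak-type bound for $1<p\le d/\alpha$ directly; the $p=1$ endpoint is done by Fubini plus the duality $I_\alpha\colon L^{d/\alpha,1}\to L^\infty$. You instead treat $I_\alpha^\theta$ as a black box on $\theta\in[0,\tfrac12]$ via the already-proved Theorem~\ref{thm_weak_bounds}(iii) with $\delta=\tfrac12$, and on $\theta\in[\tfrac12,1]$ you build a strong-type restricted $L^p$ bound from just two elementary $\theta$-uniform ingredients — the $L^1$ restricted bound (identical in spirit to the paper's $p=1$ step) and a pointwise $L^\infty$ bound $I_\alpha^\theta(\chi_E,\chi_F)\le\theta^{-\alpha}I_\alpha(\chi_F)$ — glued by log-convexity; the corner $p=d/\alpha$ is then covered by the symmetry $I_\alpha^\theta(f,g)=I_\alpha^{1-\theta}(g,f)$, which is admissible precisely because the target exponents are symmetric there. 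Both proofs converge on the same restricted-to-Lorentz transfer principle (the paper's Theorem~\ref{Linterpolation}) and both separate out $p=1$ because $L^{1,\infty}$ is not normable, handling it with the genuinely strong $L^1$ restricted bound. What your route buys is a self-contained, dyadic-free argument on half the parameter range and no need for the paper's new lemma on $\int_E I_j^\theta$; what it costs is reliance on Theorem~\ref{thm_weak_bounds}(iii), which itself rests on the same kind of dyadic machinery, so the overall dependency graph is comparable. One small remark: at $p=1$ the atomic-decomposition framing you use is not actually necessary, since the Fubini computation $\|I_\alpha^\theta(f,g)\|_1\le\|f\|_1\,\|I_\alpha(g)\|_\infty\le C\|f\|_1\,\|g\|_{d/\alpha,1}$ already applies to general $f,g$ (this is exactly what the paper does), but your version is equally valid.
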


Our last result provides strong-type bounds in the best possible range of exponents (a pentagon), uniformly away from the ``bad directions''. This is expected, since uniform bounds do not hold in this pentagon. 

\begin{theorem} \label{thm_strong_bounds}
Let $1\le p, q \le \infty$ be integrability exponents such that $(1/p,1/q)$ lies in the pentagon determined by the interior of the convex hull of $\{(1,0),(0,1),(\alpha/d,0), (0,\alpha/d), (1,1)\}$, and let $r$ be as in \eqref{r_exponent}. If $0 < \delta \leq 1/2$ and $\delta \leq  \theta \leq 1 - \delta$, then there is a constant $C = C(\delta, \alpha, d, p ,q) > 0$ such that 
\begin{equation}\label{eq_strong_bounds}
\big\| I_\alpha^\theta(f, g) \big\|_r \leq C \, \| f \|_p \, \| g \|_q 
\end{equation}
for all $f \in L^p(\mathbb{R}^d)$ and $g \in L^q(\mathbb{R}^d)$.
\end{theorem}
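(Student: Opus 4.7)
The plan is to interpolate between the uniform strong bounds of Theorem~\ref{thm_uniform_strong_bounds} on the square $[\alpha/d,1]^2$ and a trivial endpoint estimate on the coordinate axes of the $(1/p,1/q)$-plane.

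The change of variables $w=(1-\theta)y$ (respectively $w=\theta y$) in the definition \eqref{operatorI} produces, for nonnegative $f,g$, the pointwise inequalities
\[
I_\alpha^\theta(f,g)(x) \le (1-\theta)^{-\alpha}\,\|g\|_\infty\, I_\alpha(f)(x), \qquad I_\alpha^\theta(f,g)(x) \le \theta^{-\alpha}\,\|f\|_\infty\, I_\alpha(g)(x),
\]
with $I_\alpha$ the Riesz potential \eqref{operatorSI}. Restricting to $\theta\in[\delta,1-\delta]$ bounds both prefactors by $\delta^{-\alpha}$, so the Hardy--Littlewood--Sobolev inequality $I_\alpha\colon L^s(\R^d)\to L^t(\R^d)$ for $1<s<d/\alpha$ with $1/t=1/s-\alpha/d$ yields uniform strong-type bounds for $I_\alpha^\theta$ at every point of the form $(1/s,0)$ or $(0,1/s)$ with $1/s\in(\alpha/d,1)$, that is, along the two open axis segments of the pentagon.

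Next, I would invoke bilinear Riesz--Thorin interpolation, reducing to the positive case via the majorization $|I_\alpha^\theta(f,g)|\le I_\alpha^\theta(|f|,|g|)$. The set of exponents $(1/p,1/q)$ at which $I_\alpha^\theta$ satisfies a uniform strong-type $L^p\times L^q\to L^r$ bound, with $r$ as in \eqref{r_exponent}, is convex, and the scaling relation $1/r=1/p+1/q-\alpha/d$ is preserved under convex combinations of endpoint triples. By Theorem~\ref{thm_uniform_strong_bounds} this set contains the closed square $[\alpha/d,1]^2$, and by the previous step it also contains the two open axis segments. A planar convexity check shows that the convex hull of these three pieces is exactly the open pentagon $\{(x,y)\in(0,1)^2:x+y>\alpha/d\}$, which is the admissible range in the statement.

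The principal technical step is the case analysis: for a target in either of the two trapezoids adjacent to a coordinate axis (the cases $1/q<\alpha/d\le 1/p$ and $1/p<\alpha/d\le 1/q$), a two-point interpolation between a point in the square interior and a point on the corresponding axis segment suffices; for a target in the southwest triangle $\{1/p,1/q<\alpha/d,\ 1/p+1/q>\alpha/d\}$, one combines three endpoints, namely one in the square interior together with one point on each axis segment. Since all endpoint estimates enter with constants uniform in $\theta\in[\delta,1-\delta]$, so does the interpolated bound, giving \eqref{eq_strong_bounds}.
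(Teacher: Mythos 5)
Your proof is correct, but it takes a genuinely different route from the paper's. The paper does not pass through Theorem~\ref{thm_uniform_strong_bounds} at all: it proves \emph{restricted weak-type} estimates at the five vertices of the pentagon --- at $(1,1)$ from \eqref{ceiling} with $p=1$, at $(1,0)$ and $(0,1)$ via a pointwise domination by $|\theta-1|^{-\alpha}\,\|g\|_\infty\, I_\alpha(f)$ (essentially the same computation as yours) combined with the weak-type endpoint of the Riesz potential, and at $(\alpha/d,0)$, $(0,\alpha/d)$ by estimating $I_\alpha^\theta(\chi_A,\chi_B)$ pointwise --- each with a bound of the form $c\,\theta^{-\kappa_1}(1-\theta)^{-\kappa_2}$; it then applies the bilinear Marcinkiewicz interpolation theorem (Theorem~\ref{Minterpolation}) to pass from these five restricted weak-type estimates directly to strong-type bounds in the open pentagon. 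Your approach instead feeds the known strong bounds on the open square from Theorem~\ref{thm_uniform_strong_bounds}, together with Hardy--Littlewood--Sobolev on the two open axis segments, into complex interpolation (Theorem~\ref{Cinterpolation}). Both are legitimate; the paper's argument is more self-contained (it does not reuse Theorem~\ref{thm_uniform_strong_bounds}), while yours is shorter where Theorem~\ref{thm_uniform_strong_bounds} can be taken for granted. Two small cautions on your write-up. First, Theorem~\ref{thm_uniform_strong_bounds} covers the \emph{open} square $(\alpha/d,1)^2$, not the closed one; this doesn't hurt you since you only need interior points, but the phrasing ``contains the closed square'' overstates. Second, ``bilinear Riesz--Thorin'' should really be the quasi-Banach multilinear complex interpolation of Theorem~\ref{Cinterpolation}: in part of the pentagon the scaling relation \eqref{r_exponent} forces $r<1$, so the target $L^r$ is only a quasi-Banach space, and classical Riesz--Thorin does not apply; the version recorded in the appendix does.
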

\medskip

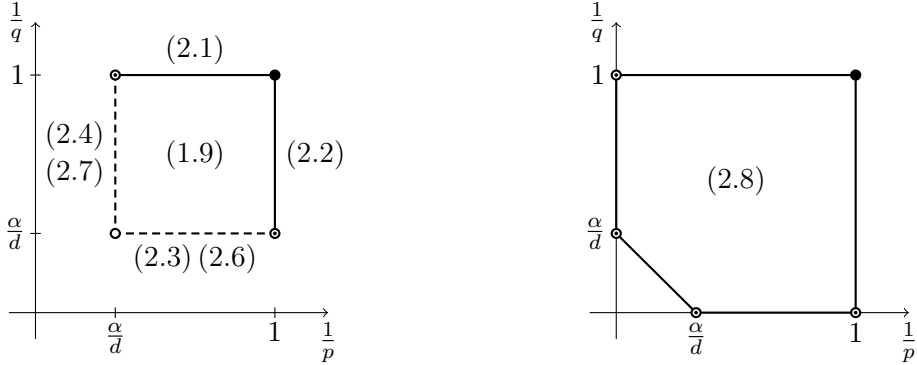
\begin{figure}[h]
  \centering
\tikzset{point/.style={fill=white, draw, circle, inner sep=1.2pt}, pointsm/.style={circle, fill=black, inner sep=0.5pt}}
\begin{subfigure}{0.48\textwidth}
    \centering
    \begin{tikzpicture}[scale=.35]
     \draw[->] (0,-1) -- (0,11) node[left] {$\frac 1 q$}; 
     \draw[->] (-1,0) -- (11,0) node[below] {$\frac 1 p$}; 

\draw[dash=on 3pt off 1.9pt phase 0.9pt, thick] 
  (3,9) -- node[midway, left]{\shortstack{\eqref{inner_wall} \\ \eqref{inner_wall_uniform}}} 
  (3,3);

\draw[dash=on 3pt off 1.9pt phase 0.9pt, thick] (3,3) -- node[midway, below]{\eqref{floor}\,\eqref{floor_uniform}} (9,3); 

\draw[thick] (9,3) node[point]{} node[pointsm]{} -- node[midway, right]{\eqref{outer_wall}} 
             (9,9) node[point,fill=black]{} -- node[midway, above]{\eqref{ceiling}} 
             (3,9) node[point]{} node[pointsm]{} 
             (3,3) node[point]{};
	
\draw (9,-0.2)--(9,0.2) (3,-0.2)--(3,0.2) (-0.2,3)--(0.2,3) (-0.2,9)--(0.2,9); 

\node at (9,0) [below] {$1$};
\node at (3,0) [below] {$\frac{\alpha}{d}$};
\node at (0,3) [left] {$\frac{\alpha}{d}$};
\node at (0,9) [left] {$1$};
\node at (6,6) [] {$\eqref{eq_uniform_strong_bounds}$};

    \end{tikzpicture}
  \end{subfigure} 
\ \
\begin{subfigure}{0.48\textwidth}
    \centering
    \begin{tikzpicture}[scale=.35]

\draw[->] (0,-1) -- (0,11) node[left] {$\frac 1 q$}; 
\draw[->] (-1,0) -- (11,0) node[below] {$\frac 1 p$}; 

\draw[thick] (3,0)  -- 
	         (9,0) node[point]{} node[pointsm]{} node[below]{$1$} -- 
	         (9,9) node[point,fill=black]{} -- 
	         (0,9) node[point]{} node[pointsm]{} node[left]{$1$} -- 
	         (0,3); 

\draw[thick] (0,3) node[point]{} node[pointsm]{} node[left]{$\frac \alpha d$} -- 
	         (3,0) node[point]{} node[pointsm]{} node[below]{$\frac \alpha d$}; 

\node at (4.5,5) [] {$\eqref{eq_strong_bounds}$};
   \end{tikzpicture}
  \end{subfigure}  
\caption{Each numbered estimate above corresponds to boundedness on an open region or a boundary segment. The bounds in Theorems~\ref{thm_weak_bounds} and
 ~\ref{thm_lorentz} are depicted in the square on the left, while the region of boundedness in Theorem~\ref{thm_strong_bounds} is shown in the figure on the right.}
\end{figure}
\FloatBarrier

\section{Preliminary estimates} \label{section_prelim}
For $j \in \Z$, let $I_j^\theta$ be the operator defined for nonnegative measurable functions $f$ and $g$ on $\R^d$ by
\begin{equation} \label{Ithetaj}
I_j^\theta(f,g)(x) = \int_{|y|\leq 2^j} f(x + (\theta -1)y) \, g(x + \theta y) \, \mathrm{d}y \, .
\end{equation}

The first lemma, proved in \cite{agt}, yields that the operator $I_j^\theta$ maps $L^1(\R^d) \times L^1(\R^d)$ to both $L^1(\R^d)$ and $L^{\frac{1}{2}}(\R^d)$ uniformly in the parameter $\theta$.
\begin{lemma}
The operator $I_j^\theta$ satisfies the following estimates uniformly in $\theta$:
\begin{align}
\big\|I_j^\theta(f, g) \big\|_{1} &\leq \|f\|_1 \, \|g\|_1 \label{aux0} \, ,
\\
\big\|I_j^\theta(f, g) \big\|_{\frac{1}{2}} & \leq c \, 2^{dj} \, \|f\|_1 \, \|g\|_1 \, .\label{aux1}
\end{align}
\end{lemma}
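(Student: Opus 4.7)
The plan is to prove \eqref{aux0} directly via Tonelli's theorem together with a translation in $x$, and to prove \eqref{aux1} by a dyadic cube decomposition that exploits the scale $2^j$ built into the domain of $y$-integration.

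For \eqref{aux0}, I would first apply Tonelli's theorem to bring the $y$-integration outside, then for each fixed $y$ translate $x \mapsto x - (\theta-1)y$ (which has unit Jacobian) to convert the inner integral into the familiar form $\int_{\R^d} f(u)\, g(u+y)\, \rd u$. Swapping orders of integration once more and bounding $\int_{|y|\leq 2^j} g(u+y)\, \rd y \leq \|g\|_1$ closes the estimate. Uniformity in $\theta$ is automatic since $\theta$ enters only through translations.

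For \eqref{aux1} the argument is more delicate, since $L^{1/2}$ is only a quasi-Banach space and classical interpolation between \eqref{aux0} and an $L^\infty$ estimate cannot reach the $L^{1/2}$ target. Instead I would tile $\R^d$ with a grid of cubes $\{Q_i\}$ of side length $2^j$ and, on each cube, apply Cauchy--Schwarz in the form
\[
\int_{Q_i} I_j^\theta(f,g)(x)^{1/2}\, \rd x \;\leq\; |Q_i|^{1/2}\, \left(\int_{Q_i} I_j^\theta(f,g)(x)\, \rd x\right)^{1/2}.
\]
The key local bound I would aim for is
\[
\int_{Q_i} I_j^\theta(f,g)(x)\, \rd x \;\leq\; \|f\, \chi_{\widetilde Q_i}\|_1 \cdot \|g\, \chi_{\widetilde Q_i}\|_1 ,
\]
where $\widetilde Q_i$ is a fixed dilate of $Q_i$ chosen independently of $\theta$. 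This would follow from the substitution $(x,y) \mapsto (u,v)=(x+(\theta-1)y,\, x+\theta y)$ of unit Jacobian, once one observes that for $x \in Q_i$ and $|y| \leq 2^j$ both $u$ and $v$ lie in $\widetilde Q_i$, uniformly in $\theta \in [0,1]$. Summing over $i$, applying Cauchy--Schwarz on the resulting sum, and using the bounded-overlap property $\sum_i \chi_{\widetilde Q_i} \leq C(d)$ would yield $\int_{\R^d} I_j^\theta(f,g)^{1/2}\, \rd x \leq c\, 2^{dj/2}\, \|f\|_1^{1/2}\, \|g\|_1^{1/2}$; squaring produces \eqref{aux1}.

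The main technical point will be the uniform-in-$\theta$ localization step. What makes it work is the simple convex-combination observation that both points $x+(\theta-1)y$ and $x+\theta y$ differ from $x$ by at most $2^j$ in norm, regardless of the value of $\theta \in [0,1]$. This rules out the $\theta^{-d}$ or $(1-\theta)^{-d}$ factors that would spoil a naive change of variable in the $y$-integral, and is what ultimately yields the clean constant $c\, 2^{dj}$ with no hidden $\theta$-dependence.
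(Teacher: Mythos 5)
Your proof is correct and self-contained. The paper itself does not reproduce the argument but cites \cite[Lemmas 4.2 and 4.3]{agt}; your route --- Tonelli plus the unimodular shift $z = x+(\theta-1)y$ for the $L^1$ bound, and a tiling by cubes of side $2^j$ combined with Cauchy--Schwarz twice and the bounded-overlap property of the dilated cubes $\widetilde Q_i$ for the $L^{1/2}$ bound --- is precisely the standard Kenig--Stein style argument that the referenced lemmas use, and your observation that both $x+(\theta-1)y$ and $x+\theta y$ stay within distance $2^j$ of $x$ uniformly for $\theta\in[0,1]$ correctly isolates why the constant is $\theta$-independent.
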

\begin{proof}
See \cite[Lemma 4.2 and Lemma 4.3]{agt}.
\end{proof}

\begin{lemma} \label{lem_00K}
For any $1\le p\le \infty$, 
the operator $I_j^\theta$ satisfies the following estimates uniformly in $\theta$:
\begin{empheq}[left={\displaystyle \big\|I_j^\theta(f, g) \big\|_{\frac{p}{p+1}} \leq \, \empheqlbrace}]{align} 
    & c \, 2^{dj} \, \|f\|_1 \, \|g\|_p \, , \label{aux20}
\\ 
    & c \, 2^{dj} \, \|f\|_p \, \|g\|_1 \, . \label{aux21}
  \end{empheq}
\end{lemma}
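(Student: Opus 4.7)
The plan is to treat the endpoints $p=1$ and $p=\infty$ of \eqref{aux20} separately, and then to cover the range $1<p<\infty$ by a single pointwise H\"older split that reduces everything to the already-available bound \eqref{aux1}.

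The case $p=1$ is a direct restatement of \eqref{aux1}. For the case $p=\infty$, I would just pull $\|g\|_\infty$ out of the defining integral of $I_j^\theta(f,g)$ and apply Fubini together with the translation invariance of Lebesgue measure, obtaining $\|I_j^\theta(f,g)\|_1 \le c\,2^{dj}\|f\|_1\|g\|_\infty$.

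For $1<p<\infty$, my first step would be to write $f(x+(\theta-1)y) = f(x+(\theta-1)y)^{1/p'}\,f(x+(\theta-1)y)^{1/p}$ and apply H\"older in $y$ with conjugate exponents $p'$ and $p$, which yields the pointwise estimate
\[
I_j^\theta(f,g)(x) \,\le\, A(x)^{1/p'}\, B(x)^{1/p},
\]
where $A(x)=\int_{|y|\le 2^j} f(x+(\theta-1)y)\,\rd y$ and $B(x) = I_j^\theta(f,g^p)(x)$. Raising to the power $p/(p+1)$ and then integrating in $x$, I would apply H\"older a second time with conjugate exponents $(p+1)/(p-1)$ and $(p+1)/2$ to reduce the task to the product bound
\[
\big\|I_j^\theta(f,g)\big\|_{p/(p+1)}^{p/(p+1)} \,\le\, \|A\|_1^{(p-1)/(p+1)}\, \|B\|_{1/2}^{1/(p+1)}.
\]
Finally, $\|A\|_1 = c\,2^{dj}\|f\|_1$ by Fubini, and \eqref{aux1} applied with $g$ replaced by $g^p$ gives $\|B\|_{1/2}\le c\,2^{dj}\|f\|_1\|g\|_p^p$. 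Collecting the exponents yields \eqref{aux20}.

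For \eqref{aux21} I would run the same scheme with the roles of $f$ and $g$ interchanged: split the $g$-factor in the integrand and use \eqref{aux1} with $f$ replaced by $f^p$. The only real care required is bookkeeping: since $p/(p+1)<1$ one must work with the $p/(p+1)$-th power of the quasi-norm, and remember the identity $(\int B^{1/2}\,\rd x)^2 = \|B\|_{1/2}$ when applying the second H\"older in $x$. This is what I would call the main, and essentially only, obstacle to presenting the argument cleanly.
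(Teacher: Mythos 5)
Your proof is correct, but it takes a genuinely different route from the paper's. The paper establishes only the endpoint $p=\infty$, namely $\|I_j^\theta(f,g)\|_1 \le c\,2^{dj}\|f\|_1\|g\|_\infty$, via Fubini and a change of variables, and then invokes the bilinear complex interpolation theorem (Theorem~\ref{Cinterpolation}, which applies to quasi-Banach targets) to interpolate between that endpoint and \eqref{aux1}; this gives \eqref{aux20} for all $1\le p\le\infty$ in one stroke. You instead handle $1<p<\infty$ by a self-contained two-step H\"older argument: first a pointwise split
\[
I_j^\theta(f,g)(x)\le A(x)^{1/p'}B(x)^{1/p},\qquad A(x)=\int_{|y|\le 2^j}f(x+(\theta-1)y)\,\rd y,\quad B(x)=I_j^\theta(f,g^p)(x),
\]
and then a second H\"older in $x$ with exponents $(p+1)/(p-1)$ and $(p+1)/2$ to arrive at
\[
\big\|I_j^\theta(f,g)\big\|_{p/(p+1)}^{p/(p+1)}\le \|A\|_1^{(p-1)/(p+1)}\|B\|_{1/2}^{1/(p+1)},
\]
after which $\|A\|_1=c\,2^{dj}\|f\|_1$ and $\|B\|_{1/2}\le c\,2^{dj}\|f\|_1\|g\|_p^p$ (via \eqref{aux1} applied to $(f,g^p)$) collect to \eqref{aux20}; the exponent arithmetic all checks out. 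What your approach buys is elementarity: you avoid appealing to the multilinear complex interpolation theorem for sub-$L^1$ targets, replacing it with explicit convexity inequalities. What the paper's approach buys is brevity and a uniform treatment of all $p$ (including the endpoints) by a single invocation of a general principle. Both yield the same constants up to dimensional factors and both are uniform in $\theta$.
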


\begin{proof}
Estimates \eqref{aux20} and \eqref{aux21} are symmetrical; we provide a proof of the former. Let $f \in L^1(\R^d)$ and $g \in L^\infty(\R^d)$ be nonnegative. By Fubini's theorem and the change of variables $z = x + (\theta - 1) y$, we have:
\begin{align*}
\big\|I_j^\theta(f,g) \big\|_1 & \leq \|g \|_\infty \int_{|y| \leq 2^j} \int_{\R^d} f(x + (\theta - 1) y) \, \rd x \, \rd y \\
& \leq \|g \|_\infty \int_{|y| \leq 2^j} \int_{\R^d} f(z) \, \rd z \, \rd y \\
& = c \, 2^{dj} \|g \|_\infty \|f \|_1.
\end{align*}
Estimate \eqref{aux20} follows from this bound together with \eqref{aux1}, by interpolation; see Theorem~\ref{Cinterpolation} in the appendix. 
\end{proof}

\begin{lemma}\label{OOKK}
Let $E$ be a measurable subset of $\R^d$ with $|E| < \infty$. The following estimates are valid for $1\le p\le \infty$: 
\begin{empheq}[left={\displaystyle 
\big\|I_j^\theta(f,g) \chi_E \big\|_{\frac{1}{2}} \leq \, \empheqlbrace}]{align} 
    & C\, (2^{dj}|E|)^{1-\frac1{p}} \, \| f\|_p \, \|g\|_1 \, \min \{2^{dj} , |E|\}^{\frac1p}  \, , \label{L0} \\
   & C\, (2^{dj}|E|)^{1-\frac1{p}} \, \| f\|_1 \, \|g\|_p \, \min \{ 2^{dj} , |E| \}^{\frac1p}  \, , \label{L00}  \\
   & C\, 2^{dj\left(1- \frac{1}{p} \right)} \, |E|^{2-\frac{\alpha}{d}-\frac1{p}} \, \| f\|_p \, \|g\|_{\frac{d}{\alpha}} \, \min \left\{2^{dj} , \frac{|E|}{(1-\theta)^{d-\alpha}} \right\}^{\frac1p}  \, , \label{L000} \\
  & C\, 2^{dj\left(1- \frac{1}{p} \right)} \, |E|^{2-\frac{\alpha}{d}-\frac1{p}} \, \| f\|_{\frac{d}{\alpha}} \, \|g\|_p \, \min \left\{2^{dj} , \frac{|E|}{\theta^{d-\alpha}} \right\}^{\frac1p}  \, , \label{L0000}
  \end{empheq}
for some $C = C(\alpha,d,p)>0$ independent of $\theta$.
\end{lemma}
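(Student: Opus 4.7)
The plan is to prove each of the four estimates by interpolating (using the appendix theorem) between endpoint bounds at $p=1$ and $p=\infty$, interpolating in the first slot for \eqref{L0}, \eqref{L000} and in the second slot for \eqref{L00}, \eqref{L0000}. A recurring tool is the elementary inequality $\|F\chi_E\|_{1/2} \le |E|\,\|F\chi_E\|_1$ (an immediate consequence of Cauchy--Schwarz), which converts any $L^1$-bound on $F\chi_E$ into an $L^{1/2}$-bound at the cost of a factor $|E|$. It is convenient to prove \eqref{L0} and \eqref{L00} first, as both will be invoked at the specific exponent $p=d/\alpha$ when deriving \eqref{L000} and \eqref{L0000}.

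For \eqref{L0} (and symmetrically \eqref{L00}) the two endpoints are immediate. At $p=1$, combining \eqref{aux1} with \eqref{aux0} (after applying the elementary inequality to the latter) and taking the smaller of the two bounds yields $c\,\min\{2^{dj},|E|\}\,\|f\|_1\|g\|_1$; at $p=\infty$, a Fubini computation gives $\|I_j^\theta(f,g)\chi_E\|_1 \le c\,2^{dj}\|f\|_\infty\|g\|_1$, and the elementary inequality produces $c\,2^{dj}|E|\,\|f\|_\infty\|g\|_1$. Interpolating between these produces \eqref{L0}.

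For \eqref{L000} (and symmetrically \eqref{L0000}), the $p=\infty$ endpoint is handled much as before but with H\"older replacing the trivial bound on the inner integral in $g$: $\int_E I_j^\theta(f,g)\,dx \le \|f\|_\infty\int_{|y|\le 2^j}\int_{E+\theta y}g(v)\,dv\,dy \le c\,2^{dj}|E|^{1-\alpha/d}\|f\|_\infty\|g\|_{d/\alpha}$. The harder $p=1$ endpoint must reproduce the full minimum $\min\{2^{dj},\,|E|/(1-\theta)^{d-\alpha}\}$, which I would achieve by combining two separate bounds. The first, $C\,2^{dj}|E|^{1-\alpha/d}\|f\|_1\|g\|_{d/\alpha}$, follows directly from \eqref{L00} applied at $p=d/\alpha$, after a brief case analysis on the factor $\min\{2^{dj},|E|\}^{\alpha/d}$ appearing in that bound. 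The second, $C\,|E|^{2-\alpha/d}/(1-\theta)^{d-\alpha}\,\|f\|_1\|g\|_{d/\alpha}$, comes from the unit-Jacobian linear change of variables $(x,y)\mapsto(u,v)=(x+(\theta-1)y,\,x+\theta y)$, with inverse $x=\theta u+(1-\theta)v$, $y=v-u$. Under this substitution,
\begin{equation*}
\int_E I_j^\theta(f,g)(x)\,dx = \int_{\R^d} f(u)\int_{A_u} g(v)\,dv\,du, \qquad A_u := \{v:\,|v-u|\le 2^j,\ \theta u+(1-\theta)v\in E\};
\end{equation*}
bounding $|A_u|\le |E|/(1-\theta)^d$ (via the further substitution $w=\theta u+(1-\theta)v$) and applying H\"older's inequality to $\int_{A_u}g\,dv$ yields $\|I_j^\theta(f,g)\chi_E\|_1 \le C|E|^{1-\alpha/d}(1-\theta)^{-(d-\alpha)}\|f\|_1\|g\|_{d/\alpha}$, and a final application of the elementary inequality produces the second bound. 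Interpolating between the resulting $p=1$ and $p=\infty$ endpoints yields \eqref{L000}.

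The main difficulty is organizing the $p=1$ endpoint of \eqref{L000} and \eqref{L0000}: the $2^{dj}$ branch of the inner minimum is not obtained by a direct computation, but instead by invoking the previously-established estimate \eqref{L00} (respectively \eqref{L0}) at the specific exponent $p=d/\alpha$ and performing a case analysis, and it is this bootstrap that produces the correct $|E|^{1-\alpha/d}$ factor outside the minimum (rather than the $|E|$ one would get from a naive route via \eqref{aux1}).
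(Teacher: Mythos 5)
Your proposal is correct and follows essentially the same strategy as the paper: use the Cauchy--Schwarz bound $\|F\chi_E\|_{1/2}\le |E|\,\|F\chi_E\|_1$ to downgrade to $L^1$ at the cost of a factor $|E|$, apply H\"older to introduce the $|E|^{1-\alpha/d}$ factors, perform the unit-Jacobian change of variables $(x,y)\mapsto(x+(\theta-1)y,\,x+\theta y)$ to produce the $(1-\theta)^{\alpha-d}$ branch of the minimum, and interpolate via the appendix theorem. The only differences are organizational: the paper obtains the $2^{dj}$ branch of the $p=1$ endpoint directly from \eqref{aux20} at $p=d/\alpha$ rather than bootstrapping \eqref{L00}, derives the $p=\infty$ endpoint \eqref{L3} through the $L^{d/\alpha}$ norm with Jensen rather than through $\int_E I_j^\theta(f,g)$ with H\"older, and interpolates the two branches separately as \eqref{L6} and \eqref{L7} before taking the minimum instead of merging the two $p=1$ bounds into one constant with a minimum before interpolating --- all of which are equivalent reorganizations of the same argument.
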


\begin{proof}
We start by deducing \eqref{L0}. Observe that the change of variables $z=x+\theta y$ gives
\[
\int_{\mathbb R^d} I_j^\theta(f,g)(x) \, \rd x = \int_{\mathbb R^d} g(z) \int_{|y|\le 2^j} f(z-y) \, \rd y \, \rd z \le C \|g\|_1  \|f\|_p 2^{dj\left(1-\frac{1}{p}\right)}
\]
and thus by the Cauchy-Schwarz inequality we have
\begin{equation}\label{L1}
\big\| I_j^\theta(f,g)\chi_E \big\|_{\frac12} \le  |E| \big\| I_j^\theta(f,g) \big\|_{1}\le
C |E| \|f\|_p  \|g\|_1 2^{dj\left(1-\frac{1}{p}\right)}.
\end{equation}
On the other hand, using \eqref{aux21} and H\"older's inequality we obtain
\begin{equation}\label{L2}
\big\| I_j^\theta(f,g)\chi_E \big\|_{\frac12} \le |E|^{1-\frac1p}\big\| I_j^\theta(f,g)  \big\|_{\frac{p}{p+1}}
\le C |E|^{1-\frac1p} 2^{dj} \|f\|_p \|g\|_1.
\end{equation}
Combining \eqref{L1} and \eqref{L2} gives \eqref{L0}. An analogous argument based on \eqref{aux20} yields \eqref{L00}. 
\par
We now proceed to the proof of \eqref{L000}. Using H\"{o}lder's inequality we have 
\[\big\| I_j^\theta(f,g)\chi_E \big\|_{\frac12} \leq |E|^{2 - \frac{\alpha}{d}} \big\| I_j^\theta(f,g) \big\|_{\frac{d}{\alpha}} \, . \]
By Jensen's inequality, Fubini's theorem, and the change of variables $z = x + \theta y$ we deduce 
\begin{align*}
\big\| I_j^\theta(f,g) \big\|_{\frac{d}{\alpha}}^{\frac{d}{\alpha}} & = \int_{\R^d} \left( \int_{|y|\leq 2^j} f(x + (\theta - 1)y) \, g(x + \theta y) \, \rd y \right)^{\frac{d}{\alpha}} \rd x \\
& \leq  C 2^{dj \left(\frac{d}{\alpha} - 1 \right)} \|f \|_{\infty}^{\frac{d}{\alpha}} \int_{|y| \leq 2^j} \int_{\R^d} g(x + \theta y )^{\frac{d}{\alpha}} \, \rd x \, \rd y \\
& = C 2^{dj \frac{d}{\alpha}} \|f \|_{\infty}^{\frac{d}{\alpha}} \|g \|_{\frac{d}{\alpha}}^{\frac{d}{\alpha}}
\end{align*}
and hence
\begin{align} \label{L3}
\big\| I_j^\theta(f,g)\chi_E \big\|_{\frac12} \leq C 2^{dj} |E|^{2 - \frac{\alpha}{d}} \|f \|_{\infty} \|g\|_{\frac{d}{\alpha}} \, .
\end{align}
Moreover, by H\"{o}lder's inequality and \eqref{aux20} with $p = d/\alpha$,
\begin{align} \label{L4}
\big\|I_j^\theta(f,g) \chi_E \big\|_{\frac{1}{2}} \leq |E|^{1-\frac{\alpha}{d}} \big\|I_j^\theta(f,g)  \big\|_{\frac{d}{d+\alpha}} \leq c 2^{dj} |E|^{1-\frac{\alpha}{d}} \|f\|_1 \|g\|_{\frac{d}{\alpha}} \, .
\end{align}
Additionally, by the Cauchy-Schwarz inequality and Fubini's theorem,
\begin{align*}
\big\|I_j^\theta(f,g) \chi_E \big\|_{\frac{1}{2}} & \leq |E| \big\|I_j^\theta(f,g) \chi_E \big\|_{1} \\
& \leq |E| \int_{B(2^j)} \int_E f(x + (\theta - 1)y) g(x + \theta y) \, \rd x \, \rd y \, .
\end{align*}
Using the change of variables $z = x+(\theta - 1)y$ we see that the previous expression equals
\[|E| \int_{B(2^j)} \int_{E + (\theta - 1)y} f(z) g(z + y) \, \rd z \, \rd y  \]
which, in turn, is bounded by
\[|E| \int_{\R^d} f(z) \int_{\frac{z - E}{\theta - 1}} g(z + y) \, \rd y \, \rd z \, .  \]
Applying H\"{o}lder's inequality to the inner integral above yields that the whole expression is bounded by 
\[|E| \|f\|_1 \|g\|_{\frac{d}{\alpha}} \left| \frac{E}{\theta - 1} \right|^{1- \frac{\alpha}{d}} = |E| \|f \|_1 \|g\|_{\frac{d}{\alpha}} |E|^{1- \frac{\alpha}{d}} |\theta - 1|^{\alpha-d}   \]
and so we conclude
\begin{align} \label{L5}
\big\|I_j^\theta(f,g) \chi_E \big\|_{\frac{1}{2}} \leq (1-\theta)^{\alpha - d} |E|^{2 - \frac{d}{\alpha}} \|f \|_1 \|g \|_{\frac{d}{\alpha}} \, .
\end{align}
Now, for $1 \leq p \leq \infty$, interpolating \eqref{L3} and \eqref{L4} results in 
\begin{align} \label{L6}
\big\|I_j^\theta(f,g) \chi_E \big\|_{\frac{1}{2}} \leq C 2^{dj} |E|^{2-\frac{\alpha}{d}-\frac{1}{p}} \|f \|_p \|g \|_{\frac{d}{\alpha}},
\end{align}
and similarly, interpolation between \eqref{L3} and \eqref{L5} yields
\begin{equation} \label{L7}
\big\|I_j^\theta(f,g) \chi_E \big\|_{\frac{1}{2}} \leq C (1-\theta)^{\frac{\alpha - d}{p}} 2^{dj \left(1 - \frac{1}{p} \right) } |E|^{2 - \frac{\alpha}{d}} \|f \|_p \| g \|_{\frac{d}{\alpha}} \, .
\end{equation}
Estimate \eqref{L000} follows at once from \eqref{L6} and \eqref{L7}. Analogously, we deduce \eqref{L0000}.
\end{proof}

We conclude this section with two useful auxiliary estimates. 
\begin{lemma}\label{SUM}
Let $d \in \mathbb{N}$, $0 < \alpha < d$, $1 \leq p \leq d/\alpha$, and $R,S>0$. There exists a positive constant $C = C(\alpha,d,p)$ such that:
\begin{enumerate}[(i)]
\item If $p < d/\alpha$, then 
\begin{equation} \label{A1}
\left(\sum_{j \in \mathbb{Z}} 2^{(\alpha - \frac dp)\frac{j}{2}} \min\{2^{dj}, R \}^{\frac{1}{2p}} \right)^2 \leq C\,  R^{\frac{\alpha}{d}} \, .
\end{equation}
\item If $1 < p$, then 
\begin{equation} \label{A2}
\sum_{j \in \mathbb{Z}} 2^{\alpha(1-p)j} \min\{2^{\alpha p j}S, R \} \leq C\, S \left( \frac{R}{S} \right)^{\frac{1}{p}} \, .
\end{equation}
\end{enumerate}
\end{lemma}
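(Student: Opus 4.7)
Proof plan. Both estimates have essentially the same structure: the summand equals a geometric term times a minimum, and the minimum forces a natural split of the index set at the crossover point. In each case, one direction of the split gives a geometric sum that converges thanks to the hypothesis ($p<d/\alpha$ in (i), $1<p$ in (ii)), while the other direction telescopes to a single dominant term after the exponents combine. I expect no real obstacle; the only bookkeeping issue is to verify that the exponent conditions are exactly what make the two partial geometric sums convergent and that the resulting dominant terms match the claimed right-hand sides.

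For part (i), let $j_0 \in \Z$ be the largest integer with $2^{dj_0} \leq R$, so $2^{dj_0} \sim R$. For $j \leq j_0$ the minimum equals $2^{dj}$, and a direct computation shows
\begin{equation*}
2^{(\alpha - \frac{d}{p})\frac{j}{2}} (2^{dj})^{\frac{1}{2p}} = 2^{\frac{\alpha j}{2}},
\end{equation*}
so $\sum_{j\le j_0} 2^{\alpha j/2} \le C\, 2^{\alpha j_0/2} \le C\, R^{\alpha/(2d)}$. For $j > j_0$ the minimum equals $R$, and since $p<d/\alpha$ forces $\alpha-d/p<0$, the geometric series $\sum_{j>j_0} 2^{(\alpha-d/p)j/2}$ converges and is bounded by $C\, 2^{(\alpha-d/p)j_0/2}$, giving the contribution $C\, R^{(\alpha-d/p)/(2d)} R^{1/(2p)} = C\, R^{\alpha/(2d)}$. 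Adding the two pieces and squaring yields the bound $C\, R^{\alpha/d}$.

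For part (ii), let $j_1 \in \Z$ be the largest integer with $2^{\alpha p j_1} S \le R$, so $2^{\alpha p j_1} \sim R/S$. For $j \le j_1$ the minimum equals $2^{\alpha p j} S$, and the summand simplifies as
\begin{equation*}
2^{\alpha(1-p)j} \cdot 2^{\alpha p j} S = 2^{\alpha j} S ,
\end{equation*}
so $\sum_{j\le j_1} 2^{\alpha j} S \le C\, 2^{\alpha j_1} S = C\, S (R/S)^{1/p}$. For $j > j_1$ the minimum equals $R$, and since $p>1$ the series $\sum_{j>j_1} 2^{\alpha(1-p)j}$ converges, yielding
\begin{equation*}
\sum_{j>j_1} 2^{\alpha(1-p)j} R \le C\, 2^{\alpha(1-p)j_1} R \le C\, (R/S)^{(1-p)/p} R = C\, S(R/S)^{1/p}.
\end{equation*}
Combining both contributions gives \eqref{A2}. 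The whole argument is a pair of two-line geometric-series estimates; the only thing to emphasize is the precise role of the exponent hypotheses, which is exactly to make the post-crossover tail geometric-summable.
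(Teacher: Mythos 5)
Your proof is correct and follows exactly the paper's intended route: split the sum at the crossover index where the minimum switches branches, observe that on the ``small $j$'' side the exponents collapse to $2^{\alpha j/2}$ (resp.\ $2^{\alpha j}S$), a convergent one-sided geometric series since $\alpha>0$, and on the ``large $j$'' side the hypothesis $p<d/\alpha$ (resp.\ $p>1$) makes the tail geometric-summable, with both dominant terms evaluating at the crossover to $R^{\alpha/(2d)}$ (resp.\ $S(R/S)^{1/p}$). The algebra at each step checks out, so there is nothing to add.
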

The proof of the inequalities in Lemma~\ref{SUM} follows by splitting the indices $j$ into the ranges where $2^j < R^{1/d}$ and the complementary range in \eqref{A1}, and where $2^j < (R/S)^{1/(\alpha p)}$ and its complement in \eqref{A2}. In both cases, the corresponding series converge and yield the stated bounds; see also \cite[Lemma 4.7]{agt}, which corresponds to the cases $p = 1$ in \eqref{A1} and $p = d/\alpha$ in \eqref{A2}. We note that the series on the left-hand side of \eqref{A1} diverges for $p \geq d/\alpha$, and the series on the left-hand side of \eqref{A2} diverges for $p \leq 1$.
\section{Proofs of Theorems \ref{thm_weak_bounds} and \ref{thm_strong_bounds}} \label{section_proofs}
In this section, we provide the proofs of Theorem \ref{thm_weak_bounds} and Theorem \ref{thm_strong_bounds}. Recall that if $h$ is a function in $L^{r,\infty}(\R^d)$ for some $0 < r < \infty$, then
\begin{equation} \label{weaknorm}
\|h \|_{r, \infty} \leq \sup_{0 < |E| < \infty} |E|^{-\frac{1}{s} + \frac{1}{r}} \|h \chi_E \|_s
\end{equation}
where $0 < s < r$ and the supremum is taken over measurable sets $E \subseteq \mathbb{R}^d$ of finite measure~\cite{grafakos2014classical}.\par 

Expressing $\R^d$ as the union of annuli,
\[\R^d = \bigcup_{j \in \Z} B(2^j) \setminus B(2^{j-1}) \]
we deduce the following pointwise estimate for $I_\alpha^\theta$ in terms of the auxiliary operator $I_j^\theta$ \eqref{Ithetaj}:
\begin{equation} \label{IboundedIj}
I_\alpha^\theta(f,g)(x) \leq c \sum_{j \in \Z} 2^{(\alpha - d)j} \, I_j^\theta(f,g)(x) \, .
\end{equation}

\medskip
\begin{proof}[Proof of Theorem \ref{thm_weak_bounds} $(i),(ii),(iii),(iv)$]
~
\par 
We start by proving part $(i)$. Let us fix $1\le p<d/\alpha$ and $r$ such that $1/r +\alpha/d = 1/p +1$. Using \eqref{weaknorm} with $s=1/2$, the pointwise estimate \eqref{IboundedIj}, and \eqref{L0} yields
\begin{align*}
\big\|I_\alpha^\theta(f,g) \big\|_{r, \infty} & \leq \sup_{0 < |E| < \infty} |E|^{-2 + \frac{1}{r}} \big\|I_\alpha^\theta(f,g) \chi_E \big\|_{\frac{1}{2}} \\
& \leq c \sup_{0 < |E| < \infty} |E|^{\frac{1}{p}-1-\frac{\alpha}{d}} \left( \sum_{j \in \Z} 2^{\frac{(\alpha - d)j}{2}} \big\| I_j^\theta(f,g) \chi_E  \big\|_\frac{1}{2}^\frac{1}{2} \right)^2 \\
& \leq C \sup_{0 < |E| < \infty} |E|^{-\frac{\alpha}{d}} \|f\|_p \|g\|_1 \left(\sum_{j \in \Z} 2^{ \left(\alpha-\frac{d}{p} \right)\frac{j}{2}}
\min \{2^{dj} , |E|\}^{\frac1{2p}}    \right)^2 \\
& \leq C \, \|f \|_p \|g \|_1 ,
\end{align*}
where we have used \eqref{A1} with $R = |E|$. The proof of part $(ii)$ is analogous and is based on \eqref{L00}. \par 
We now focus on part $(iii)$. Assume that $0 \leq \theta \leq 1- \delta$ and $1 \leq p < d/\alpha$. Then, by \eqref{weaknorm} with $s=1/2$, \eqref{IboundedIj}, and \eqref{L000},
\begin{align*}
\big\|I_\alpha^\theta(f,g) \big\|_{p, \infty} & \leq \sup_{0 < |E| < \infty} |E|^{-2 + \frac{1}{p}} \big\|I_\alpha^\theta(f,g) \chi_E \big\|_{\frac{1}{2}} \\
& \leq c \sup_{0 < |E| < \infty} |E|^{-2+\frac{1}{p}} \left( \sum_{j \in \Z} 2^{\frac{(\alpha - d)j}{2}} \big\| I_j^\theta(f,g) \chi_E  \big\|_\frac{1}{2}^\frac{1}{2} \right)^2 \\
& \leq C \sup_{0 < |E| < \infty} |E|^{-\frac{\alpha}{d}} \|f\|_p \|g\|_1 \left(\sum_{j \in \Z} 2^{ \left(\alpha-\frac{d}{p} \right)\frac{j}{2}}
\min \left\{2^{dj} , \frac{|E|}{(1-\theta)^{d-\alpha}} \right\}^{\frac1{2p}}    \right)^2 \\
& \leq C \delta^{(\alpha - d) \frac{\alpha}{d}} \, \|f \|_p \|g \|_{\frac{d}{\alpha}},
\end{align*}
where we   used \eqref{A1} with $R = (1-\theta)^{\alpha-d}|E|$. Part $(iv)$ is deduced similarly, based on \eqref{L0000}.
\end{proof}

\medskip

\begin{proof}[Proof of Theorem \ref{thm_strong_bounds}]
~
\par
It suffices to obtain restricted weak-type estimates for $I_\alpha^\theta$, with bounds depending on 
positive powers of $\theta^{-1}$ and $(1-\theta)^{-1}$, at the following points:
\[(1/p,1/q) \in \{(1,1),(1,0),(0,1), (\alpha/d,0), (0, \alpha/d)\}  \]
and corresponding $r$ given by \eqref{r_exponent}. The result then follows by Marcinkiewicz interpolation, Theorem~\ref{Minterpolation} in the appendix. \par 
The required estimate at the point $(1,1)$ is an immediate consequence of \eqref{ceiling} with $p = 1$ (or equivalently, \eqref{outer_wall} with $q=1$). \par 
Regarding the points $(1,0),(0,1)$, we prove that 
\begin{empheq}[left={\displaystyle \big\|I_\alpha^\theta(f,g) \big\|_{\frac{d}{d - \alpha}, \infty} \leq \, \empheqlbrace}]{align} 
    & c \, (1-\theta)^{-\alpha} \, \|f \|_1 \, \|g \|_\infty                 \, , \label{weak4} \\
    & c \, \theta^{-\alpha} \, \|f \|_\infty \, \|g \|_1               \, . \label{weak5}
  \end{empheq}
\noindent
To establish \eqref{weak4} we note that if $g \in L^\infty(\R^d)$, then 
\[
I_\alpha^\theta(f,g)(x)   \leq \|g \|_\infty \int_{\R^d} f(x + (\theta - 1)y) |y|^{\alpha - d} \, \rd y  
  = |\theta - 1|^{-\alpha} \|g \|_\infty \int_{\R^d} f(z) |z-x|^{\alpha - d} \, \rd z,
\]
where in the second step we used the change of variables $z = x + (\theta - 1)y$. Hence \eqref{weak4} follows from classical fractional integration. Estimate \eqref{weak5}  is deduced similarly. \par 
At last, we deduce restricted weak-type bounds for $I_\alpha^\theta$ at $(\alpha/d,0)$ and $(0,\alpha/d)$:
\begin{empheq}[left={\displaystyle \big\|I_\alpha^\theta(\chi_A,\chi_B) \big\|_\infty \leq \, \empheqlbrace}]{align} 
    & c \, (1-\theta)^{-\alpha} \, |A|^{\frac{\alpha}{d}}                  \, , \label{resweak1} \\
    & c \, \theta^{-\alpha} \, |B|^{\frac{\alpha}{d}}  \, , \label{resweak2} 
  \end{empheq}
where $A$ and $B$ are measurable subsets of $\R^d$ of finite measure. As the estimates are analogous, we only provide a proof of the first. Note that 
\[
I_\alpha^\theta(\chi_A, \chi_B)(x)   \leq \int_{\R^d} \chi_A(x + (\theta - 1)y) |y|^{\alpha - d} \, \rd y  
 = \int_{\frac{-x + A}{\theta - 1}} |y|^{\alpha - d} \, \rd y \, .
\]
Since the function $y \mapsto |y|^{\alpha - d}$ blows up at the origin (and it is radially symmetric), we can estimate the integral above by 
\[\int_{B(R)} |y|^{\alpha - d} \, \rd y \quad \text{with} \quad R = \frac{c|A|^{\frac1d}}{(1 - \theta)} \]
so that $\left|\frac{-x + A}{\theta - 1}\right| \leq |B(R)|$. Thus,
\begin{align*}
I_\alpha^\theta(\chi_A, \chi_B)(x) & \leq \int_0^R \int_{\partial B(\rho)} |y|^{\alpha - d} \, \rd S(y) \, \rd \rho = \frac{c}{(1-\theta)^\alpha} \, |A|^{\frac{\alpha}{d}} \, .
\end{align*}
\end{proof}
\medskip
\begin{proof}[Proof of Theorem \ref{thm_weak_bounds} $(v)$]
~
\par
Since $(\alpha/d, \alpha/d)$ belongs to the interior of the convex hull of \[\{(1,0),(0,1),(\alpha/d,0), (0,\alpha/d), (1,1)\}\] we have, by Theorem \ref{thm_strong_bounds},
\begin{equation*} \label{weak6}
\big\| I_\alpha^\theta(f,g) \big\|_{\frac{d}{\alpha}, \infty} \leq \big\| I_\alpha^\theta(f,g) \big\|_{\frac{d}{\alpha}} \leq \frac{c}{\theta^{\kappa_1}(1-\theta)^{\kappa_2}} \, \|f \|_{\frac{d}{\alpha}} \, \| g \|_{\frac{d}{\alpha}}
\end{equation*}
for some positive constants $\kappa_1, \kappa_2$ depending only on $\alpha$ and $d$.
\end{proof}

\section{Proof of Theorem \ref{thm_lorentz}} \label{section_proof_lorentz}

In this section we provide a proof of Theorem \ref{thm_lorentz}. First, we need a lemma.
\begin{lemma}
Let $1 < p \leq d/\alpha$ and $A$,$B$ and $E$ be measurable subsets of $\R^d$ of finite measure. Then, the auxiliary operator $I_j^\theta$ defined in \eqref{Ithetaj} satisfies
\begin{empheq}[left={\displaystyle \int_E I_j^\theta(\chi_A, \chi_B) \, \rd x \leq \, \empheqlbrace}]{align} 
    & C \, 2^{dj\left(1 - \frac{\alpha p}{d} \right)} \min \left\{ 2^{\alpha p j} |E|, |A| |B|^{\frac{\alpha p}{d}} \right\}                  \, , \label{aux1_lor} \\
    & C \, 2^{dj\left(1 - \frac{\alpha p}{d} \right)} \min \left\{ 2^{\alpha p j} |E|, |A|^{\frac{\alpha p}{d}} |B| \right\}  \, , \label{aux2_lor} 
  \end{empheq}
for some $C = C(\alpha,d,p) > 0$.
\end{lemma}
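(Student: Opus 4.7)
The plan is to prove the two bounds
\begin{equation*}
\int_E I_j^\theta(\chi_A, \chi_B)\,\rd x \le c\, 2^{dj}\,|E|
\qquad \text{and} \qquad
\int_E I_j^\theta(\chi_A, \chi_B)\,\rd x \le C\, 2^{dj(1 - \alpha p/d)}\,|A|\,|B|^{\alpha p/d}
\end{equation*}
separately and then take their minimum to recover \eqref{aux1_lor}: indeed, the first equals $c\, 2^{dj(1-\alpha p/d)} \cdot 2^{\alpha p j}|E|$, matching the left entry of the $\min$ in \eqref{aux1_lor}. The first of these is immediate, since the integrand of $I_j^\theta(\chi_A,\chi_B)$ is pointwise bounded by $1$ and the $y$-integration runs over a ball of radius $2^j$, so $I_j^\theta(\chi_A,\chi_B)(x) \le c\, 2^{dj}$ pointwise; one then integrates over $E$.

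The second estimate is the substantive part. First I would drop the cutoff $\chi_E$ and work on all of $\R^d$, then apply the linear change of variables $(x,y) \mapsto (z,w)$ with $z = x + (\theta-1)y$ and $w = x + \theta y$. This map has Jacobian determinant equal to $1$ independently of $\theta \in [0,1]$, and sends the region $\{|y| \le 2^j\}$ to $\{|w-z| \le 2^j\}$. Consequently,
\begin{equation*}
\int_{\R^d} I_j^\theta(\chi_A, \chi_B)(x)\,\rd x = \iint_{\R^d \times \R^d} \chi_A(z)\,\chi_B(w)\,\chi_{\{|w-z| \le 2^j\}}\,\rd z\,\rd w.
\end{equation*}
Evaluating the $w$-integration first bounds the right-hand side by $|A|\cdot \min\{|B|, c\,2^{dj}\}$, and then the elementary inequality $\min\{a,b\}\leq a^{\beta} b^{1-\beta}$ applied with $\beta = \alpha p/d \in (\alpha/d, 1]$ (valid since $1 < p \le d/\alpha$) produces the clean factorization $C\,|A|\,|B|^{\alpha p/d}\,2^{dj(1-\alpha p/d)}$, which is the desired bound.

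The companion inequality \eqref{aux2_lor} follows by the same argument, except that one integrates in $z$ first rather than $w$; by symmetry this yields $|B|\cdot \min\{|A|,c\,2^{dj}\}$, and the same power-mean estimate applied to $|A|$ closes the argument. The main technical step is the recognition that the change of variables above is volume-preserving for every $\theta \in [0,1]$; once this is in hand, both bounds reduce to an essentially one-line calculation. I expect no serious obstacle beyond this identification: its role is to decouple the integrands $\chi_A$ and $\chi_B$, something a direct approach in the original $(x,y)$-coordinates cannot achieve without estimating one of them trivially by $1$ and thereby destroying the sharp bilinear interaction $|A||B|^{\alpha p/d}$.
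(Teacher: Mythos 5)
Your proof is correct and is essentially the paper's argument: both establish the trivial bound $c\,2^{dj}|E|$ and then use the same volume-preserving change of variables to decouple $\chi_A$ and $\chi_B$, after which your evaluation $|A|\min\{|B|,c\,2^{dj}\}$ followed by $\min\{a,b\}\le a^{\beta}b^{1-\beta}$ is just an unfolding of the paper's direct application of H\"older's inequality with exponent $d/(\alpha p)$ to the inner integral. A small bonus of your phrasing is that it handles $p=d/\alpha$ (i.e.\ $\beta=1$) uniformly, whereas the paper disposes of that endpoint separately by citing an earlier lemma.
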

\begin{proof}
We first note that if $p = d/\alpha$ then \eqref{aux1_lor} and \eqref{aux2_lor} coincide; this case is handled in \cite[Lemma 4.4]{agt}. Therefore we assume for the rest of the proof that $p < d/\alpha$. Due to symmetry we only prove \eqref{aux1_lor}. Using Fubini's theorem, the change of variables $z = x + (\theta - 1)y$, and H\"{o}lder's inequality with $d/(\alpha p) > 1$ we deduce
\begin{align*}
\int_E I_j^\theta(\chi_A, \chi_B) \, \rd x & \leq  \int_{\R^d} \chi_A(z) \int_{\R^d} \chi_B(z+y) \chi_{B(2^j)}(y) \, \rd y \, \rd z \\
& \leq \int_{\R^d} \chi_A(z) \left(\int_{\R^d} \chi_B(z+y) \, \rd y \right)^{\frac{\alpha p}{d}} \left(\int_{\R^d} \chi_{B(2^j)}(y)\, \rd y \right)^{1-\frac{\alpha p}{d}} \\
& = C \, |A| \, |B|^{\frac{\alpha p}{d}} \, 2^{dj\left(1-\frac{\alpha p}{d} \right)} \, .
\end{align*}
Estimate \eqref{aux1_lor} follows from the bound above together with the fact that 
\begin{align*}
\int_E I_j^\theta(\chi_A, \chi_B) \, \rd x \leq c 2^{dj} |E| = c \, 2^{dj\left(1-\frac{\alpha p}{d} \right)} \, 2^{\alpha p j} \, |E| \, .
\end{align*}
\end{proof}

\begin{proof}[Proof of Theorem \ref{thm_lorentz}]
~
\par

In view of symmetry we only prove the estimate in \eqref{floor_uniform}. We first assume that $1 < p \leq d/\alpha$. Note that in this case the weak space $L^{p,\infty}(\R^d)$ is a Banach space. Therefore, when $1 < p \leq d/\alpha$ it suffices to show that \eqref{floor_uniform} holds when $f$ and $g$ are characteristic functions of sets of finite measure; see Theorem~\ref{Linterpolation} in the appendix. Let $A$ and $B$ be such sets. Using~\eqref{weaknorm} with $s=1$, \eqref{IboundedIj} and \eqref{aux1_lor} we deduce
\begin{align*}
\big\|I_\alpha(\chi_A, \chi_B)  \big\|_{p,\infty} & \leq c \sup_{0 < |E| < \infty} |E|^{-1 + \frac{1}{p}} \sum_{j \in \Z} 2^{(\alpha - d)j} \int_E I_j^\theta(f,g) \, \rd x \\
& \leq C \sup_{0 < |E| < \infty} |E|^{-1 + \frac{1}{p}} \sum_{j \in \Z} 2^{\alpha(1-p)j} \min \left\{ 2^{\alpha p j} |E|, |A| |B|^{\frac{\alpha p}{d}} \right\} \\
& \leq C \, |A|^{\frac{1}{p}} \, |B|^{\frac{\alpha}{d}}, 
\end{align*}
where in the last step we have used \eqref{A2} with $S = |E|$ and $R = |A||B|^{\frac{\alpha p}{d}}$. Analogously, one deduces the estimate in \eqref{inner_wall_uniform} with $1 < q \leq d/\alpha$, based on \eqref{aux2_lor}. \par 
We now turn our attention to the case $p = 1$ in \eqref{floor_uniform} (the case $q = 1$ in \eqref{inner_wall_uniform} is obtained similarly). By appropriate changes of variables we have 
\begin{align*}
\big\|I_\alpha^\theta(f,g) \big\|_{1,\infty} & \leq \big\|I_\alpha^\theta(f,g) \big\|_1 \\
&=\int_{\mathbb R^d} \int_{\mathbb R^d} f(x+(\theta-1)y) \, g(x+\theta y) |y|^{\alpha-d} \,\rd  x \,\rd  y\\ 
& = \int_{\mathbb R^d}f(z) \int_{\mathbb R^d} g(z+y) \, |y|^{\alpha-d} \,\rd  y \,\rd  z\\   
& \leq \|f \|_1 \, \| I_\alpha(g) \|_\infty \, .
\end{align*}
The desired result follows by duality: the linear fractional integral $I_\alpha$ maps $L^{\frac{d}{d-\alpha},\infty}(\R^d)$ to $L^1(\R^d)$, therefore it maps $L^{\frac{d}{\alpha},1}(\R^d)$ to $L^\infty(\R^d)$.
\end{proof}

\section{Sharpness of Theorem \ref{thm_weak_bounds}} \label{section_sharp} 
In this section, we investigate the sharpness of all the estimates in Theorem \ref{thm_weak_bounds}. To do so, we make use of two auxiliary functions. Let $\Phi \in C_c^\infty(\R^d)$ be a nonnegative radial function with $\operatorname{supp}(\Phi) \subseteq B(1/8)$. Additionally, we consider a function $h$ on $\R^d$ defined by 
\begin{equation} \label{function_h}
h(x) = |x|^{-\alpha} \left(\log  \frac{1}{|x|}   \right)^{-\kappa} \chi_{|x| \leq 1/e}(x), 
\end{equation}
where $\kappa = (d+\alpha)/(2d)$. Note that $\alpha/d < \kappa < 1$ since $\alpha < d$. One can readily see that $h$ belongs to $L^{\frac{d}{\alpha}}(\R^d)$. Moreover, we have the following lower bound for the linear fractional integral of $h$:
\begin{align*}
I_\alpha (h)(x) & = \int_{|y| \leq 1/e} |y|^{-\alpha} \left(\log  \frac{1}{|y|}   \right)^{-\kappa} |x - y|^{\alpha - d} \, \rd y \\
 & \geq \int_{|x| \leq |y| \leq 1/e} |y|^{-\alpha} \left(\log  \frac{1}{|y|}   \right)^{-\kappa} |x - y|^{\alpha - d} \, \rd y \, \chi_{|x| \leq 1/8}(x) \\
 & \geq c \int_{|x| \leq |y| \leq 1/e} |y|^{-d}  \, \rd y \left(\log  \frac{1}{|x|}   \right)^{-\kappa} \chi_{|x| \leq 1/8}(x) \\
 & = c \left( \log \frac{1}{|x|} - 1 \right) \left(\log  \frac{1}{|x|}   \right)^{-\kappa} \chi_{|x| \leq 1/8}(x) \, .
\end{align*}
Observe that for $|x| \leq 1/8 < e^{-2}$ we have
\[\log \frac{1}{|x|} - 1 = \log \frac{1}{|x|} \left( 1 + \frac{1}{\log |x|}  \right) \geq \frac{1}{2} \log \frac{1}{|x|}  \]
and so, it follows from the above estimate that
\begin{equation} \label{h_lower_bound}
I_\alpha (h)(x) \geq c \left(\log  \frac{1}{|x|}   \right)^{1-\kappa} \chi_{|x| \leq 1/8}(x) 
\end{equation}
for $x \in \R^d$.  

\vspace{2mm}
\noindent
{\bf Case I.} We begin with the fact that we cannot take $p=d/\alpha$  in \eqref{ceiling}. Suppose, towards a contradiction, that this were the case. Then, by letting $\theta \to 0$ and using Fatou's lemma (which also holds for weak $L^p$ spaces), we would obtain
\begin{equation}\label{sharp1}
\| I_\alpha (h) \varphi_t  \|_{1,\infty} \le C \|\varphi_t\|_1 
\end{equation}
for all $t > 0$, where $\varphi_t(x) = t^{-d} \Phi(x/t)$. The right-hand side of \eqref{sharp1} is constant, equal to $C \|\Phi \|_1$; we prove that the left-hand side tends to infinity as $t \to 0$, leading to a contradiction. Assuming that $0 < t < 1$ and using \eqref{h_lower_bound} we deduce
\begin{align*}
I_\alpha(h)(x) \varphi_t(x) & \geq c \left( \log \frac{1}{|x|} \right)^{1-\kappa} t^{-d} \Phi(x/t) \\
& = c \left(\log \frac{1}{t} + \log \frac{t}{|x|}  \right)^{1-\kappa} t^{-d} \Phi(x/t) \\
& \geq c \left( \log \frac{1}{t} \right)^{\frac{1-\kappa}{2}} \left( \log \frac{t}{|x|} \right)^{\frac{1-\kappa}{2}}  t^{-d} \Phi(x/t)
\end{align*}
and hence
\begin{equation*}
\big\|I_\alpha(h) \varphi_t \big\|_{1,\infty} \geq c \left( \log \frac{1}{t} \right)^{\frac{1-\kappa}{2}} \big\|(\log | \cdot |^{-1})^{\frac{1-\kappa}{2}} \Phi \big\|_{1,\infty}
\end{equation*}
where we have used that $L^{1,\infty}(\R^d)$ has the same dilation structure as $L^1(\R^d)$. It is straightforward to verify that $\big\|(\log | \cdot |^{-1})^{\frac{1-\kappa}{2}} \Phi \big\|_{1,\infty}$ is a finite constant. Therefore, letting $t \to 0$ in the last inequality above yields the desired conclusion.

\vspace{2mm}
\noindent
{\bf Case II.} In a similar fashion (but letting $\theta \to 1$ instead) we obtain the sharpness of \eqref{outer_wall}, that is, one cannot take $q=d/\alpha$ in that inequality.

\vspace{2mm}
\noindent
{\bf Case III.} Next, we obtain the sharpness of \eqref{floor} in terms of $\delta$. If \eqref{floor} were valid for $\delta =0$, then, by the same argument as in Case I, we would have
\begin{equation}\label{sharp2}
\| \psi_t I_\alpha (h)  \|_{p,\infty} \le C \|\psi_t\|_p 
\end{equation}
for all $t > 0$, where $\psi_t(x) = t^{-d/p} \Phi(x/t)$. The right-hand side of \eqref{sharp2} equals $C\|\Phi\|_p$ and is therefore finite. We estimate the left-hand side using \eqref{h_lower_bound} (and assuming $0 < t < 1$):
\begin{equation*}
\big\|\psi_t I_\alpha(h) \big\|_{p,\infty} \geq c \left( \log \frac{1}{t} \right)^{\frac{1-\kappa}{2}} \big\|(\log | \cdot |^{-1})^{\frac{(1-\kappa)p}{2}} \Phi^p \big\|_{1,\infty}^{1/p}
\end{equation*}
which blows-up as $t \to 0$, a contradiction.

\vspace{2mm}
\noindent
{\bf Case IV.} Analogously to Case III, we obtain the sharpness of \eqref{inner_wall}, that is, one cannot take $\delta = 0$ in that inequality.

\vspace{2mm}
\noindent
{\bf Case V.} The sharpness of \eqref{bad_corner} in terms of $\delta$; that is, the fact that we cannot take $\delta =0$ in that estimate, is achieved as in Case III with $p = d/\alpha$. \par

\appendix
\section{Interpolation theorems} \label{appendix1}
In this section we collect the interpolation results used in this work for the reader's convenience. First, we recall that a bilinear operator $T$ acting on measurable functions is said to be of \textit{restricted weak-type} $(p,q,r)$ (with constant $c>0$) if 
\[ \| T(\chi_A, \chi_B) \|_{r,\infty} \leq c \, |A|^{\frac{1}{p}} \, |B|^{\frac{1}{q}} \]
for all measurable sets $A$ and $B$ of finite measure. The integrability exponents $p,q,r$ belong to $(0,\infty]$.

The first interpolation result we state is the Marcinkiewicz interpolation theorem. It provides strong-type bounds from a finite set of restricted weak-type estimates and is used in the proofs of Theorem~\ref{thm_uniform_strong_bounds} in~\cite{agt} and Theorem~\ref{thm_strong_bounds}. General multilinear versions of this interpolation theorem are proved in~\cite{grafakos2012multilinear} and~\cite{grafakos2014modern}. 

\begin{theorem} \label{Minterpolation}
Let $0<p_i, q_i, r_i \leq \infty$ for $i = 1,2,3$. Suppose that the points 
\[
\Big( \frac{1}{p_1}, \frac{1}{q_1}\Big), \quad
\Big( \frac{1}{p_2}, \frac{1}{q_2}\Big), \quad
\Big( \frac{1}{p_3}, \frac{1}{q_3}\Big),
\]
do not lie on the same line in $\mathbb R^2$. 
For $0<\theta_1, \theta_2,\theta_3  <1$ satisfying $\theta_1+ \theta_2+\theta_3 =1$ consider the points $0<p,q,r\leq \infty$ such that
\[
\Big( \frac{1}{p }, \frac{1}{q },\frac{1}{r }\Big)=
\theta_1\Big( \frac{1}{p_1}, \frac{1}{q_1},\frac{1}{r_1}\Big)+
\theta_2\Big( \frac{1}{p_2}, \frac{1}{q_2},\frac{1}{r_2}\Big)+
\theta_3\Big( \frac{1}{p_3}, \frac{1}{q_3},\frac{1}{r_3}\Big)
\]
and 
\[
\frac{1}{r} \le \frac{1}{r_1}+\frac{1}{r_2}+\frac{1}{r_3} \, .
\]
Let $T$ be a bilinear operator that is of restricted weak-type $(p_i,q_i,r_i)$ (with constant $c_i>0$) for $i=1,2,3 $. Then there is a constant $C >0$ 
depending only on $p_i$, $q_i$, $r_i$, and $\theta_i$ ($i=1,2,3 $) such that 
\[
\| T(f, g)\|_{L^r} \le C \, c_1^{\theta_1} c_2^{\theta_2} c_3^{\theta_3} 
\| f\|_{L^p} \| g\|_{L^q} 
\]
for all functions $f\in L^p(\R^d)$ and $g \in L^q(\R^d)$.
 \end{theorem}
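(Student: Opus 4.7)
The plan is to combine a level-set decomposition of $f$ and $g$ with a three-vertex interpolation, where the choice of vertex is dictated by the sizes of the level sets. By bilinearity and homogeneity of $T$, first normalize $\|f\|_p = \|g\|_q = 1$, and decompose $f = \sum_{k \in \Z} f_k$ with $f_k = f \chi_{E_k}$ and $E_k = \{2^k < |f| \leq 2^{k+1}\}$, and analogously $g = \sum_{\ell \in \Z} g_\ell$ with supports $F_\ell$. By Chebyshev's inequality, $|E_k| \leq 2^{-kp}$ and $|F_\ell| \leq 2^{-\ell q}$. The pointwise bound $|f_k| \leq 2^{k+1}\chi_{E_k}$ combined with bilinearity yields
\begin{equation*}
|T(f,g)| \leq 4 \sum_{k, \ell \in \Z} 2^{k+\ell}\, T(\chi_{E_k}, \chi_{F_\ell}).
\end{equation*}

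The heart of the argument is to use the three restricted weak-type bounds $\|T(\chi_{E_k}, \chi_{F_\ell})\|_{r_i, \infty} \leq c_i |E_k|^{1/p_i} |F_\ell|^{1/q_i}$ in a way that decays geometrically in $(k, \ell)$. I would partition $\Z^2$ into three subsets $A_1, A_2, A_3$ on which the $i$-th estimate is the tightest, dictated by the three linear functionals $(\log|E_k|, \log|F_\ell|) \mapsto (\log|E_k|)/p_i + (\log|F_\ell|)/q_i$. The non-collinearity of the vertices $(1/p_i, 1/q_i)$ is precisely what is needed for each $A_i$ to be a nondegenerate cone on which the weight $|E_k|^{1/p_i} |F_\ell|^{1/q_i}$ dominates the other two, producing geometric decay of the summands within the region. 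Inside each $A_i$, the weak-type bound is converted to $L^{r_i}$-type control on the corresponding partial sum via the standard reduction of restricted weak-type to strong-type, using the slack $1/r \leq 1/r_1 + 1/r_2 + 1/r_3$ to absorb the Lorentz-index loss inherent in this conversion.

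The final step is to assemble the three partial sums by the quasi-triangle inequality in $L^r$ with exponent $s = \min(1, r)$, and to verify that each geometric sum $\sum_{(k,\ell) \in A_i} 2^{s(k+\ell)} |E_k|^{s/p_i} |F_\ell|^{s/q_i}$ is bounded by $\|f\|_p^s \|g\|_q^s = 1$. This comparison uses the interior condition $(1/p, 1/q) = \sum_i \theta_i (1/p_i, 1/q_i)$ together with Hölder in sequence spaces to relate $2^k |E_k|^{1/p_i}$ with $\|f_k\|_p$ (and similarly for $g$) inside each dominance region, and the product $c_1^{\theta_1} c_2^{\theta_2} c_3^{\theta_3}$ emerges from this weighted step. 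The principal obstacle is the geometric partitioning of $\Z^2$ and the bookkeeping of the weights; non-collinearity of the three vertices is used essentially so that the three dominance regions cover $\Z^2$ without gaps and each region's summation converges, while the hypothesis on $r$ keeps the weak-to-strong conversion from deteriorating at extreme values of the $r_i$.
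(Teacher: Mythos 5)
The paper does not prove Theorem~\ref{Minterpolation}; it is recorded in the appendix with a pointer to~\cite{grafakos2012multilinear} and~\cite{grafakos2014modern}, so there is no internal proof to compare against. Your outline does reproduce the skeleton of the argument in those references: a dyadic level-set decomposition of $f$ and $g$, a partition of the index grid $\Z^2$ into three dominance cones whose nondegeneracy comes from the non-collinearity of the points $(1/p_i,1/q_i)$, geometric decay of the summands within each cone, and reassembly via the quasi-triangle inequality in $L^r$.

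There is, however, a substantive gap in your reduction step. The displayed pointwise inequality
\[
|T(f,g)| \leq 4\sum_{k,\ell\in\Z} 2^{k+\ell}\, T(\chi_{E_k},\chi_{F_\ell})
\]
is not valid for a general bilinear operator; it implicitly assumes that $T$ is positive and monotone in each argument, which is not a hypothesis of the theorem. Bilinearity gives only $T(f,g)=\sum_{k,\ell}T(f_k,g_\ell)$, and there is no pointwise domination of $|T(f_k,g_\ell)|$ by a multiple of $T(\chi_{E_k},\chi_{F_\ell})$. Since the restricted weak-type hypothesis controls $T$ only on pairs of characteristic functions, you cannot apply it directly to the pieces $T(f_k,g_\ell)$. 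The standard remedy --- and the one taken in the cited references --- is to first upgrade restricted weak-type $(p_i,q_i,r_i)$ to the Lorentz estimate $T\colon L^{p_i,1}\times L^{q_i,1}\to L^{r_i,\infty}$ (the analogue of Theorem~\ref{Linterpolation}, which requires a separate argument when $r_i<1$ because $L^{r_i,\infty}$ is then only quasi-normed), and only then estimate
\[
\big\|T(f_k,g_\ell)\big\|_{r_i,\infty}\leq c_i\,\|f_k\|_{p_i,1}\,\|g_\ell\|_{q_i,1}\leq C\,c_i\,2^{k+\ell}\,|E_k|^{1/p_i}\,|F_\ell|^{1/q_i}.
\]
With that intermediate step your partition-of-$\Z^2$ scheme can be carried through, but as written the proof silently uses structure on $T$ that the theorem does not grant. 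I would also press you on the phrase ``standard reduction of restricted weak-type to strong-type'': within each cone you hold weak-$L^{r_i}$ control for differing $r_i$, and passing from these to a single strong $L^r$ bound on the full sum is the delicate core of the Marcinkiewicz argument (a distributional estimate exploiting the geometric decay), not a routine conversion; moreover the condition $1/r\leq 1/r_1+1/r_2+1/r_3$ you appeal to is automatic from $1/r=\sum_i\theta_i/r_i$ with $0<\theta_i<1$, so it cannot by itself be what absorbs the loss.
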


The next result concerns interpolation of bilinear operators, in the sense that if an operator satisfies strong-type bounds at two points, then it satisfies strong-type bounds at all points in between. This result follows from general multilinear complex interpolation theorems (see~\cite[Theorem 7.2.9, Corollary 7.2.11]{grafakos2014modern} and~\cite[Theorem 3.2]{grafakos2022interpolation}), and it is used in the proofs of Lemmas~\ref{lem_00K} and~\ref{OOKK}.
\begin{theorem} \label{Cinterpolation}
Let $0 < p_i,q_i, r_i \leq \infty$ ($i=1,2$) and let $T$ be a bilinear operator acting of measurable functions such that 
\[\|T(f,g)\|_{r_i} \leq c_i \, \|f \|_{p_i} \, \|g\|_{q_i}\]
for all functions $f\in L^{p_i}(\R^d)$ and $g \in L^{q_i}(\R^d)$ and some positive constants $c_i$ ($i = 1,2$). Then for $0 < \varphi < 1$ and $p,q,r$ such that 
\[\frac{1}{p} = \frac{1- \varphi}{p_1} + \frac{\varphi}{p_2}, \qquad \frac{1}{q} = \frac{1- \varphi}{q_1} + \frac{\varphi}{q_2}, \qquad \frac{1}{r} = \frac{1- \varphi}{r_1} + \frac{\varphi}{r_2}\]
we have
\[\|T(f,g)\|_{r} \leq  c_1^{1 - \varphi} \, c_2^{\varphi} \, \|f \|_{p} \, \|g\|_{q}\]
for all $f\in L^{p}(\R^d)$ and $g \in L^q(\R^d)$.
\end{theorem}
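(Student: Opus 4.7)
The plan is to apply the bilinear complex interpolation method: construct analytic families of functions parametrized by the vertical strip $S = \{z \in \mathbb{C} : 0 \leq \operatorname{Re} z \leq 1\}$ and invoke the three-lines lemma. By a standard density and monotone convergence argument, I would first reduce the problem to the case where $f$, $g$, and (in the duality formulation) $h$ are finite linear combinations of characteristic functions of pairwise disjoint sets of finite measure.

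In the regime $r \geq 1$, where $L^r$ is a Banach space, I would normalize $\|f\|_p = \|g\|_q = 1$ and use the duality representation
\[
\|T(f,g)\|_r = \sup_{\|h\|_{r'} \leq 1} \bigg|\int_{\R^d} T(f,g)(x) \, h(x) \, \rd x\bigg|.
\]
Setting $\tfrac{1}{p(z)} = \tfrac{1-z}{p_1} + \tfrac{z}{p_2}$, and analogously $\tfrac{1}{q(z)}$ and $\tfrac{1}{r'(z)}$, I would define
\[
f_z = |f|^{p/p(z)} \operatorname{sgn}(f), \qquad g_z = |g|^{q/q(z)} \operatorname{sgn}(g), \qquad h_z = |h|^{r'/r'(z)} \operatorname{sgn}(h),
\]
so that $f_\varphi = f$, $g_\varphi = g$, $h_\varphi = h$, and on each boundary line $\operatorname{Re} z = i-1$ (for $i=1,2$) the respective norms equal $1$ in $L^{p_i}$, $L^{q_i}$, and $L^{r_i'}$. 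The function $F(z) = \int T(f_z, g_z) \, h_z \, \rd x$ is then entire in $z$ (since the families arise from simple functions), continuous and bounded on $\overline{S}$.

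By the hypothesized strong-type bounds combined with H\"older's inequality on each vertical boundary line, $|F(z)| \leq c_i$ on $\operatorname{Re} z = i-1$. The three-lines lemma then yields $|F(\varphi)| \leq c_1^{1-\varphi} c_2^\varphi$, and taking the supremum over admissible $h$ produces the claimed estimate. The cases in which some $p_i$, $q_i$, or $r_i$ equal $\infty$ require only minor modifications to the normalizations of the corresponding analytic families.

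The main technical obstacle is the regime $r < 1$, in which $L^r$ is not locally convex and the duality step breaks down. Here I would instead work directly with an analytic function built from $T(f_z, g_z)$ without an auxiliary $h_z$---for instance applying the three-lines lemma after an appropriate $r$-convexification---following the general multilinear complex interpolation framework developed in \cite{grafakos2012multilinear} and \cite{grafakos2022interpolation}, to which I would defer for the technicalities of this endpoint regime.
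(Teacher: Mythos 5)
The paper itself offers no proof of Theorem~\ref{Cinterpolation}; it only cites the general multilinear complex interpolation literature (\cite[Theorem 7.2.9, Corollary 7.2.11]{grafakos2014modern} and \cite[Theorem 3.2]{grafakos2022interpolation}), and your proposal is a faithful sketch of the analytic-family/three-lines argument those references implement, so the two treatments are in essence the same. Your reduction to simple functions, the construction $f_z=|f|^{p/p(z)}\operatorname{sgn}(f)$ (and likewise $g_z$, $h_z$) normalized so that the boundary norms are $1$, and the application of Hölder plus the strong-type hypotheses on the two boundary lines are all correct, modulo the usual modifications at exponent $\infty$ which you rightly flag. One point worth emphasizing, though: the regime you label ``the main technical obstacle,'' $r<1$, is the \emph{only} regime in which this theorem is invoked in the paper. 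The applications in Lemma~\ref{lem_00K} interpolate to a target exponent $r=p/(p+1)\le 1$, and those in Lemma~\ref{OOKK} (inequalities \eqref{L6} and \eqref{L7}) interpolate with both endpoint targets equal to $1/2$, so $r=1/2$ throughout. Hence the duality-based Banach case you spell out in detail is never used, and the substance of what is being cited lies entirely in the quasi-Banach case that you defer. Since your final paragraph, like the paper, delegates that case to the same references, your proposal is acceptable as a justification of the theorem; but if the goal were a self-contained proof, the $r<1$ endgame (whether via the power/convexification trick you allude to, or the subharmonicity argument used in the cited sources) is precisely the part that would need to be written out.
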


At last, we present a result that yields sufficient conditions for a bilinear operator to admit an extension to the product of Lorentz spaces $L^{p,1}(\R^d) \times L^{q,1}(\R^d)$, $0 < p,q < \infty$, being the (Euclidean) bilinear counterpart of the result in \cite[Exercise~1.4.7]{grafakos2014classical}. We use this result in the proof of Theorem~\ref{thm_lorentz}.

\begin{theorem} \label{Linterpolation}
Let $Z$ be a Banach space of real-valued measurable functions on $\R^d$, and let $T$ be a bilinear operator defined on the space of finitely simple functions on $\R^d$ and taking values in $Z$. Suppose that for $0 < p,q < \infty$ and some constant $C>0$ the following restricted weak-type estimate
\[\|T(\chi_A, \chi_B) \|_Z \leq C \, |A|^{\frac{1}{p}} \, |B|^{\frac{1}{q}}\]
holds for all measurable sets $A,B$ of finite measure. Then, $T$ has a bounded extension from $L^{p,1}(\R^d)\times L^{q,1}(\R^d)$ to $Z$.
\end{theorem}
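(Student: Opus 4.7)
The plan is to turn the restricted weak-type hypothesis into a bound on finitely simple functions via a nested layer-cake decomposition in $L^{p,1}$, and then to extend by density.

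First I would record the following atomic identity. If $\varphi \geq 0$ is finitely simple, write it as $\varphi = \sum_{i=1}^N \alpha_i \chi_{E_i}$ with pairwise disjoint sets $E_i$ and distinct values $\alpha_1 > \cdots > \alpha_N > 0$, and set $\alpha_{N+1} := 0$, $c_j := \alpha_j - \alpha_{j+1}$, and $A_j := \bigcup_{i \leq j} E_i$. The sets $A_j$ are nested, $\varphi = \sum_{j=1}^N c_j \chi_{A_j}$, and a direct computation of the distribution function of $\varphi$ yields
\[
\|\varphi\|_{p,1} = p \int_0^\infty |\{\varphi > \lambda\}|^{1/p} \, \rd \lambda = p \sum_{j=1}^N c_j \, |A_j|^{1/p}.
\]
This identity converts the $L^{p,1}$ quasi-norm into a form on which the restricted weak-type hypothesis can act directly.

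Next I would apply this to the operator. For real-valued finitely simple $f,g$, split $f = f^+ - f^-$ and $g = g^+ - g^-$, and decompose each nonnegative part as above: $f^\sigma = \sum_j c_j^\sigma \chi_{A_j^\sigma}$, $g^\tau = \sum_k d_k^\tau \chi_{B_k^\tau}$ for $\sigma,\tau \in \{+,-\}$. By bilinearity, $T(f,g)$ is a finite signed sum of terms of the form $c_j^\sigma d_k^\tau \, T(\chi_{A_j^\sigma}, \chi_{B_k^\tau})$, so the triangle inequality in the Banach space $Z$ together with the hypothesis yields
\[
\|T(f,g)\|_Z \leq C \sum_{\sigma,\tau} \Bigl( \sum_j c_j^\sigma |A_j^\sigma|^{1/p} \Bigr) \Bigl( \sum_k d_k^\tau |B_k^\tau|^{1/q} \Bigr) \leq C' \, \|f\|_{p,1} \, \|g\|_{q,1}.
\]

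The last step is extension by density. Finitely simple functions of finite-measure support are dense in $L^{p,1}(\R^d)$ for $0 < p < \infty$, and likewise in $L^{q,1}(\R^d)$. Given $f \in L^{p,1}$ and $g \in L^{q,1}$, choose simple approximants $f_n \to f$, $g_n \to g$; writing $T(f_n,g_n) - T(f_m,g_m) = T(f_n - f_m, g_n) + T(f_m, g_n - g_m)$ and applying the bound above shows that $\{T(f_n,g_n)\}$ is Cauchy in $Z$, so its limit defines the extension, and the estimate passes to the limit. The main obstacle is the atomic identity in the first step --- once $\|\varphi\|_{p,1}$ is rewritten as a sum of the form $\sum c_j |A_j|^{1/p}$ over nested sets, the rest is routine bookkeeping via bilinearity and density.
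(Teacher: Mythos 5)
Your proof is correct and is exactly the ``standard proof'' the paper omits, citing instead the linear prototype in \cite[Exercise~1.4.7]{grafakos2014classical}: decompose a nonnegative simple function over its nested level sets so that $\|\varphi\|_{p,1}=p\sum_j c_j|A_j|^{1/p}$, split general real-valued simple $f,g$ into positive and negative parts, expand $T$ by bilinearity, apply the restricted weak-type hypothesis termwise, and close by density of finitely simple functions in $L^{p,1}$ together with the Cauchy criterion in the Banach space $Z$. No gap; the only details you leave tacit (well-definedness of the limit independent of the approximating sequences, and $\|f^{\pm}\|_{p,1}\le\|f\|_{p,1}$) are routine and fill in exactly as you indicate.
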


We note that the proof of the theorem above is straightforward and follows exactly the idea as in the linear case; the standard proof is omitted. 

\section*{Acknowledgments}
The authors gratefully acknowledge Grigorios Kounadis for his assistance with the \LaTeX{} code used to illustrate the regions of boundedness. This research was partially funded by the Austrian Science Fund (FWF), project number 10.55776/F65.

\end{document}